\documentclass[11pt]{article}

\usepackage[T1]{fontenc}
\usepackage[utf8]{inputenc}
\usepackage{lmodern}
\usepackage{amsmath,amssymb,amsthm}
\usepackage{microtype}
\usepackage[colorlinks=true,linkcolor=blue,citecolor=blue,urlcolor=blue,
pdftitle={Square-to-Cube Lindelöfness and Power Separations in Hattori Spaces},
pdfauthor={Xing-Yu Hu},
pdfsubject={Finite and countable powers, power separations, and one-sided obstructions in Hattori spaces},
pdfkeywords={Hattori space, Lindelöf space, Sorgenfrey line, finite powers, countable power, closed discrete subspace, affine subspace, monotone trace, discrete surface, Cantor space}]{hyperref}

\newtheorem{theorem}{Theorem}
\newtheorem{corollary}[theorem]{Corollary}
\newtheorem{lemma}[theorem]{Lemma}
\newtheorem{proposition}[theorem]{Proposition}
\newtheorem{question}[theorem]{Question}
\newtheorem{definition}[theorem]{Definition}

\newcommand{\R}{\mathbb R}
\numberwithin{equation}{section}

\title{Square-to-Cube Lindel\"ofness and Power Separations in Hattori Spaces}

\author{Xing-Yu Hu\thanks{Corresponding author.}\\
School of Mathematics and Statistics\\
Hanjiang Normal University\\
Shiyan, Hubei 442000, China\\
Email \texttt{huxingyu@hjnu.edu.cn}}

\date{}

\begin{document}

\maketitle

\begin{abstract}
Let \(H(A)\) be the Hattori space associated with \(A\subseteq\R\), and put
\(B=\R\setminus A\).  Kulesza announced at the 2018 Joint Mathematics
Meetings that Lindel\"ofness of \(H(A)^{2n}\) forces Lindel\"ofness of
\(H(A)^{2n+1}\) for every positive integer \(n\).  The meeting abstract
states this implication without a proof.  This paper proves the case \(n=1\)
in ZFC\@.
\[
H(A)^2\,\text{is Lindel\"of}\quad\Longrightarrow\quad
H(A)^3\,\text{is Lindel\"of}.
\]
The proof passes from an uncountable closed discrete subset of the cube to a
perfect set with injective coordinate projections and uses the one-sided
local bases to obtain three cocountable Pareto-maximality sets with countable
triple intersection.

The same one-sided geometry gives a pointwise half-space criterion for
affine discreteness and, when \(H(A)^2\) is Lindel\"of, a four-coordinate
obstruction for uncountable closed discrete monotone traces.  Known results on
mixed Sorgenfrey--Euclidean products lead to hereditary-Lindel\"of criteria
for finite powers, a countable-power characterization, and a finite-power
Lindel\"of reduction.
For every \(n\in\mathbb N\), the power \(H(A)^n\) is hereditarily
Lindel\"of if and only if the Sorgenfrey power \(B^n\) is Lindel\"of, and
\(H(A)^\omega\) is hereditarily Lindel\"of if and only if every finite
Sorgenfrey power of \(B\) is Lindel\"of.  A transfinite construction gives
\(A\subseteq\R\) for which \(H(A)^2\) and \(H(A)^3\) are Lindel\"of,
whereas \(H(A)^4\) contains a closed discrete affine line of cardinality
\(\mathfrak c\).  For this ZFC example, \(B^2\) is not Lindel\"of and the
second and third Hattori powers are not hereditarily Lindel\"of.
\end{abstract}

\noindent
Keywords. Hattori space, Lindel\"of space, Sorgenfrey line, finite powers,
countable power, closed discrete subspace, affine subspace, monotone trace,
discrete surface, Cantor space.

\medskip

\noindent
2020 Mathematics Subject Classification. Primary 54D20, 54B10. Secondary
54F05, 03E15, 03E17.

\section{Introduction}

For \(A\subseteq\R\), put \(B=\R\setminus A\).  The Hattori space \(H(A)\)
is the real line with Euclidean local bases at points of \(A\) and Sorgenfrey
local bases at points of \(B\) \cite[Definition~5.1]{Hattori2010}.  Every
Hattori space is hereditarily Lindel\"of
\cite[Proposition~3.1(c)]{ChatyrkoHattori2013}.  Further properties were
studied by Kulesza \cite{Kulesza2017}, and Hattori later surveyed this family
of mixed topologies \cite{Hattori2026}.

Lin and Li posed a square product question relating Lindel\"ofness, normality,
and metrizability.  Under CH they gave an example with uncountable \(B\) and
Lindel\"of \(H(A)^2\) \cite[Question~4.1 and Example~4.2]{LinLi2025}.
Hern\'andez-Hern\'andez, Ram\'irez-Ch\'avez, and Rojas-Hern\'andez proved the
square characterization \cite[Theorem~6.7]{HHRR2025}.
\begin{equation}
\label{eq:square-characterization}
H(A)^2\,\text{is Lindel\"of}
\quad\Longleftrightarrow\quad
B\,\text{contains no subspace homeomorphic to }2^\omega.
\end{equation}
They asked for characterizations of the Hattori spaces whose cube, higher
finite powers, or countable power are Lindel\"of
\cite[Problem~7.8]{HHRR2025}.  At the 2018 Joint Mathematics Meetings,
Kulesza \cite{KuleszaJMM2018} announced
\begin{equation}
\label{eq:kulesza-parity}
H(A)^{2n}\,\text{is Lindel\"of}
\quad\Longrightarrow\quad
H(A)^{2n+1}\,\text{is Lindel\"of}
\qquad(n\ge1),
\end{equation}
together with odd-to-even separations and a finite-power versus
countable-power separation.  The meeting abstract states these conclusions
without proofs.

Theorem~\ref{thm:zfc-square-cube} proves the case \(n=1\) of
\eqref{eq:kulesza-parity} in ZFC\@.  Together with
\eqref{eq:square-characterization}, it gives
\begin{equation}
\label{eq:cube-characterization-intro}
\begin{aligned}
H(A)^3\,\text{is Lindel\"of}
&\quad\Longleftrightarrow\quad H(A)^2\,\text{is Lindel\"of}\\
&\quad\Longleftrightarrow\quad B\,\text{contains no subspace homeomorphic to }2^\omega.
\end{aligned}
\end{equation}

The proof uses structural properties of closed discrete subsets of a Hattori
cube.  Under the square hypothesis, every coordinate fiber is countable, so
an uncountable closed discrete set can be thinned until all three coordinate
projections are one-to-one.  Its Euclidean condensation set then contains a
perfect subset on which the three projections remain one-to-one.  Closed
discreteness and the one-sided local bases force three local
Pareto-maximality sets to be cocountable, while their triple intersection is
countable.

The one-sided geometry also constrains structured closed discrete subsets in
higher powers.  A pointwise half-space criterion characterizes affine
discreteness.  For nonempty \(A\), no positive-dimensional affine subspace is
discrete in the first three powers.  Under Lindel\"ofness of the square, an
uncountable closed discrete monotone trace requires at least two increasing
and two decreasing coordinates.  Both phenomena exhibit a four-coordinate
threshold.

Gruenhage and Nyikos proved the hereditary-Lindel\"of characterization used
for subspaces of finite products of Sorgenfrey and Euclidean lines
\cite[Theorem~6.9]{GruenhageNyikos1993}.  Hern\'andez-Hern\'andez,
Ram\'irez-Ch\'avez, and Rojas-Hern\'andez proved the needed product theorem
\cite[Proposition~6.6]{HHRR2025}.  These results give the reductions in
Section~\ref{sec:reductions}.  For every \(n\in\mathbb N\), the power
\(H(A)^n\) is hereditarily Lindel\"of if and only if \(B^n\) is Lindel\"of,
and \(H(A)^\omega\) is hereditarily Lindel\"of if and only if every finite
Sorgenfrey power of \(B\) is Lindel\"of.  A uniform finite-power reduction
also shows that, for \(n\ge3\), Lindel\"ofness of \(B^{n-1}\) implies
Lindel\"ofness of \(H(A)^n\).

Lin and Li's example was obtained under CH \cite[Example~4.2]{LinLi2025}.
Section~\ref{sec:zfc} gives a ZFC construction of \(A\subseteq\R\)
such that \(H(A)^2\) and \(H(A)^3\) are Lindel\"of, whereas \(H(A)^4\)
contains a closed discrete affine line of cardinality \(\mathfrak c\).  For this set \(A\),
\(B^2\) is not Lindel\"of, showing that non-Lindel\"ofness of the Sorgenfrey square
is necessary but not sufficient for non-Lindel\"ofness of the Hattori cube.
Neither \(H(A)^2\) nor \(H(A)^3\) is hereditarily Lindel\"of.

All finite and countable powers in what follows carry the product topology.

\section{Preliminaries}

Write \(2^\omega\) for the Cantor space and \(\mathfrak c\) for the
cardinality of the continuum.  The bounding number \(\mathfrak b\) is the
least cardinality of an unbounded family in \(\omega^\omega\) under eventual
domination.  Here \(\mathbb N=\{1,2,\ldots\}\).  In every finite-dimensional
Euclidean space, \(\|\cdot\|\) denotes the Euclidean norm and
\(B_{\mathrm E}(p,r)\) the open ball with center \(p\) and radius \(r>0\).
A point \(p\) is a Euclidean condensation point of a set \(S\) if every
Euclidean neighborhood of \(p\) meets \(S\) in uncountably many points.
Unless a different topology is explicitly specified, subsets of \(\R\) carry
the Euclidean subspace topology.

\begin{definition}[{\cite[Definition~5.1]{Hattori2010}}]
Let \(A\subseteq\R\), and put \(B=\R\setminus A\).  If \(x\in A\), the
usual Euclidean intervals
\[
(x-\varepsilon,x+\varepsilon),\qquad \varepsilon>0,
\]
form a local base at \(x\).  If \(x\in B\), the half-open intervals
\[
[x,x+\varepsilon),\qquad \varepsilon>0,
\]
form a local base at \(x\).  The resulting space is denoted by \(H(A)\).
\end{definition}

The Hattori topology refines the Euclidean topology, so \(H(A)\), all its
powers, and all their subspaces are \(T_1\).

\begin{theorem}[{\cite[Theorem~6.7]{HHRR2025}}]
\label{thm:square}
Let \(A\subseteq\R\) and \(B=\R\setminus A\).  Then \(H(A)^2\) is
Lindel\"of if and only if \(B\) contains no subspace homeomorphic to
\(2^\omega\).
\end{theorem}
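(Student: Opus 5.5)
The plan is to prove the two implications separately: an explicit antidiagonal construction gives the forward direction, and a reduction to a single corner of the square gives the converse.

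\emph{From a Cantor set in \(B\) to non-Lindel\"ofness.} Suppose \(K\subseteq B\) is homeomorphic to \(2^\omega\). I will produce an uncountable closed discrete subset of \(H(A)^2\). Since \(K\subseteq\R\) is compact and zero-dimensional, its induced order is isomorphic to \(2^\omega\) with the lexicographic order. The coordinatewise complement map \(s\mapsto 1-s\) reverses that order. This yields an order-reversing homeomorphism \(f\colon K\to K\). Put
\[
D=\{(x,f(x)):x\in K\}.
\]
The set \(D\) is Euclidean compact, so it is Euclidean closed and hence closed in the finer topology of \(H(A)^2\). For \(x\in K\subseteq B\) the set \([x,x+1)\times[f(x),f(x)+1)\) is a neighbourhood of \((x,f(x))\). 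Any other point \((y,f(y))\) of \(D\) in this neighbourhood would need \(y>x\) and \(f(y)\ge f(x)\), which is impossible because \(f\) is strictly decreasing. So \(D\) is an uncountable closed discrete subset of \(H(A)^2\), and \(H(A)^2\) is not Lindel\"of.

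\emph{From no Cantor set in \(B\) to Lindel\"ofness.} Let \(\mathcal U\) be an open cover, and let \(V\) be the union of the Euclidean interiors of its members. By Euclidean Lindel\"ofness, countably many members cover \(V\). The complement \(F=\R^2\setminus V\) is Euclidean closed, and each of its points has at least one coordinate in \(B\). I split \(F\) according to the four types \(A\times A\), \(A\times B\), \(B\times A\), \(B\times B\); the \(A\times A\) part is empty. On \(F\cap(B\times A)\) the relevant basic sets are \([x,x+\varepsilon)\times(y-\varepsilon,y+\varepsilon)\). Since \(B\times A\) carries the topology of a subspace of the Sorgenfrey line times the real line, and such subspaces are hereditarily Lindel\"of, countably many members of \(\mathcal U\) cover \(F\cap(B\times A)\). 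The \(A\times B\) part is symmetric. There remains the set \(E\) of points of \(F\cap(B\times B)\) not yet covered. If \(E\) were uncountable, a standard Sorgenfrey-square argument would let me pass to an uncountable \(E'\subseteq E\) that is discrete in the quadrant neighbourhoods. Then \(E'\) is the graph of a strictly decreasing partial injection, and it contains a Euclidean-perfect subset. From this I would extract a Cantor set sitting in \(B\), which contradicts the hypothesis.

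The main obstacle is this last extraction. The Euclidean closure of \(E'\) may add limit points whose coordinates lie in \(A\), so a perfect set inside \(\overline{E'}\) need not project into \(B\). I would first try to use closedness of \(F\) together with the one-sided neighbourhoods. Any limit point of \(E'\) with a coordinate in \(A\) has a neighbourhood that is Euclidean in that coordinate. I would then show that the points of \(E'\) accumulating at such limits are already covered by countably many members of \(\mathcal U\), so they can be discarded. The aim is to arrange, via a Cantor--Bendixson style thinning, a compact perfect set of pairs whose first projection is injective and lands inside \(B\). If this bookkeeping does not close up, the fallback is to invoke the published proof, \cite[Theorem~6.7]{HHRR2025}.
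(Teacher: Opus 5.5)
Your first direction is correct: an order-reversing autohomeomorphism $f$ of $K\subseteq B$ makes the graph $D$ Euclidean compact, hence closed in $H(A)^2$, and the quadrants $[x,x+1)\times[f(x),f(x)+1)$ isolate its points. The paper gives no proof of this theorem; it quotes \cite[Theorem~6.7]{HHRR2025}. The converse, however, has a genuine gap exactly where the hypothesis must be used.

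Nothing supports the claim that the uncountable antichain $E'$ contains a Euclidean-perfect subset. The members of $\mathcal U$ are arbitrary open subsets of $H(A)^2$, so you have no control over the descriptive complexity of $E$, let alone of the thinned set $E'$, and the perfect set theorem does not apply. This claim also carries the whole converse: a perfect subset of $E'\subseteq B\times B$ would immediately project onto an uncountable compact subset of $B$. The perfect sets you can actually produce lie in $\overline{E'}$, and under the hypothesis every nonempty perfect subset of $\R^2$ has points with a coordinate in $A$. You leave the treatment of those points open and fall back on the published proof, so the converse is not established.

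The missing idea is to make the leftover set Borel. Refine $\mathcal U$ to basic Hattori boxes before extracting countable subfamilies; each box is a product of Euclidean open and half-open intervals, hence Euclidean Borel. Your countable covers of $V$, $F\cap(A\times B)$ and $F\cap(B\times A)$ are sound. With boxes, the uncovered set $E=\R^2\setminus\bigl(V\cup\bigcup_nW_n\bigr)$ is a Borel set contained in $B\times B$. If $E$ were uncountable, the perfect set theorem would give a compact $K\cong2^\omega$ inside $E$ itself, so no limit points with a coordinate in $A$ ever enter. One of $\pi_1[K],\pi_2[K]$ would then be an uncountable compact subset of $B$, hence would contain a copy of $2^\omega$. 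Thus $E$ is countable, and one more member per point finishes the subcover; no discreteness thinning is needed. This is the argument of the paper's proof of Theorem~\ref{thm:finite-power-reduction}, run with $n=2$ and the no-Cantor hypothesis used directly.

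Your closure route can also be completed, by a one-sided extremum argument rather than by discarding points. Each $q\in E$ lies in a member containing $[q_1,q_1+\varepsilon_q)\times[q_2,q_2+\varepsilon_q)$, so the open quadrant $(q_1,q_1+\varepsilon_q)\times(q_2,q_2+\varepsilon_q)$ lies in $V$ and misses $F$. Fix $n$ such that a bounded part $E_n$ of $\{q\in E:\varepsilon_q>1/n\}$ is uncountable, and let $C\subseteq F$ be its condensation set. Take $p\in C$ with $p_1\in A$. Then $p\in F\cap(A\times B)$ lies in some $W_m\supseteq(p_1-\delta,p_1+\delta)\times[p_2,p_2+\delta)$, and this half-box misses $E$: its top lies in $V$ and its bottom edge lies in $W_m$. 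Since $p\in F$ avoids the free quadrants of points of $E_n$ and those points have first coordinate in $B$, the points of $E_n$ near $p$ lie strictly to the lower right of $p$. Hence $p_1$ is a local minimum of $\pi_1$ on $C$. Symmetrically, $p_2\in A$ makes $p_2$ a local minimum of $\pi_2$ on $C$. So each $\pi_i[C]\cap A$ is countable. Each compact set $\pi_i[C]$ is then countable, since otherwise $\pi_i[C]\cap B$ would be an uncountable Borel subset of $B$. This contradicts perfectness of $C$.
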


\begin{lemma}[{\cite[Proposition~2.3]{ChatyrkoHattori2013}}]
\label{lem:second-countable}
Let \(A\subseteq\R\), and put \(B=\R\setminus A\).  Then \(H(A)\) is second
countable if and only if \(B\) is countable.
\end{lemma}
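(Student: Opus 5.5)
The plan is to prove the two implications separately, in each case working directly with the local bases of the Definition. The only structural fact needed is that the topology of \(H(A)\) differs from the Euclidean topology only at points of \(B\), and there only through the right-half-open intervals \([x,x+\varepsilon)\).

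For the ``if'' direction, assume \(B\) is countable. I will exhibit a countable base. Fix the countable family
\[
\mathcal{U}=\{(p,q): p<q,\ p,q\in\mathbb Q\}\cup\{[b,q): b\in B,\ q\in\mathbb Q,\ q>b\}.
\]
Each member is open in \(H(A)\). A rational open interval is Euclidean open, and the Hattori topology refines the Euclidean one. For \([b,q)\), a point \(y\in(b,q)\) has either a Euclidean or a Sorgenfrey basic neighbourhood inside \((b,q)\), and \(b\) itself has \([b,q)\) as a basic neighbourhood. To see that \(\mathcal U\) is a base, consider a basic neighbourhood of a point \(x\).
\begin{itemize}
\item If \(x\in A\), any \((x-\varepsilon,x+\varepsilon)\) contains a rational interval around \(x\).
\item If \(x\in B\), any \([x,x+\varepsilon)\) contains \([x,q)\) for a rational \(q\in(x,x+\varepsilon)\).
\end{itemize}
Since \(B\) and \(\mathbb Q\) are countable, \(\mathcal U\) is countable, so \(H(A)\) is second countable.

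For the ``only if'' direction, assume \(\mathcal B\) is a countable base of \(H(A)\). For each \(x\in B\), the set \([x,x+1)\) is open, so there is \(U_x\in\mathcal B\) with \(x\in U_x\subseteq[x,x+1)\). Then \(x=\min U_x\), so the assignment \(x\mapsto U_x\) is injective on \(B\). Since \(\mathcal B\) is countable, \(B\) is countable.

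Neither step is a real obstacle. The only point needing care is checking that \([b,q)\) is genuinely open at its interior points when those lie in \(A\). This holds because the Euclidean interval \((b,q)\) is open in \(H(A)\).
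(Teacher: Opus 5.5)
Your proof is correct and complete. When \(B\) is countable, the rational open intervals together with the sets \([b,q)\), \(b\in B\), \(q\in\mathbb Q\), \(q>b\), form a countable base. Conversely, the assignment \(x\mapsto U_x\) is injective on \(B\) because \(x=\min U_x\). The paper gives no proof of its own here; it only quotes the result from Chatyrko--Hattori. Your argument is the standard direct one, the same that shows the Sorgenfrey line is not second countable, so there is nothing further to compare.
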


\begin{proposition}[{\cite[Lemma~3.2]{BurkeMoore1998}}]
\label{prop:discrete-surface}
Let \(X\subseteq\R\) carry the Sorgenfrey subspace topology and let
\(m\in\mathbb N\).  Then \(X^m\) is Lindel\"of if and only if it contains no
uncountable set \(D\) such that for all distinct \(x,y\in D\) there are
indices \(i,j\in\{1,\ldots,m\}\) with
\[
x_i<y_i
\qquad\text{and}\qquad
x_j>y_j.
\]
Such a set is called a discrete surface.
\end{proposition}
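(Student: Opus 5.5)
The plan is to prove the two implications separately. In both, the key observation is that the Sorgenfrey box \([x,x+\varepsilon)^m\cap X^m\) is contained in the closed upper cone
\[
C_x=\{y\in X^m:\ y_i\ge x_i\ \text{for } i=1,\ldots,m\},
\]
and that \(C_x\) is open in \(X^m\). A set \(D\) is a discrete surface exactly when \(D\cap C_x=\{x\}\) for every \(x\in D\), that is, when no two distinct points of \(D\) are coordinatewise comparable.

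\emph{Lindel\"of implies no uncountable discrete surface.} Let \(D\) be an uncountable discrete surface and put \(W=\bigcup_{x\in D}C_x\). This set is open, contains \(D\), and each \(C_x\) meets \(D\) only in \(x\). I will use the known fact that finite powers of the Sorgenfrey line, and hence their subspaces \(X^m\), are perfect, so every open set is an \(F_\sigma\). I would cite this rather than reprove it. Write \(W=\bigcup_k F_k\) with each \(F_k\) closed in \(X^m\). Since \(D\) is uncountable, some \(F_k\) contains uncountably many points of \(D\). If \(X^m\) is Lindel\"of, then the closed subspace \(F_k\) is Lindel\"of. The cover \(\{C_x\cap F_k: x\in D\}\) of \(F_k\) then has a countable subcover. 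But each of its members contains only one point of \(D\), so a countable subfamily cannot cover \(D\cap F_k\). This contradiction proves the first implication.

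\emph{No uncountable discrete surface implies Lindel\"of.} Suppose \(X^m\) is not Lindel\"of. Fix an open cover by basic boxes \([x,x+\varepsilon_x)^m\) with no countable subcover. I will build points \(y_\alpha\) and boxes \(U_\alpha=[y_\alpha,y_\alpha+\varepsilon_\alpha)^m\) by recursion on \(\alpha<\omega_1\), choosing \(y_\alpha\notin\bigcup_{\beta<\alpha}U_\beta\). This is possible because the countably many boxes chosen so far do not cover \(X^m\). Next I pass to an uncountable subfamily on which \(\varepsilon_\alpha>1/n\) for a fixed \(n\) and all \(y_\alpha\) lie in a single rational cube of diameter \(<1/n\). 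For \(\beta<\alpha\) in this subfamily, \(y_\alpha\ge y_\beta\) coordinatewise would put \(y_\alpha\in U_\beta\), which is impossible. So every comparable pair satisfies \(y_\alpha\le y_\beta\) with \(\beta<\alpha\).

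\emph{Main obstacle.} The remaining step, which I expect to be the hardest, is to exclude this ``backward'' comparability on an uncountable subset. The one-sided argument above says nothing about it. My first attempt would strengthen the recursion: choose \(y_\alpha\) also outside the countable set of Euclidean-isolated points of the uncovered remainder. Among the uncovered points I would pick \(y_\alpha\) so that it is not below any earlier \(y_\beta\). This requires checking that \(\bigcup_{\beta<\alpha}\{z: z\le y_\beta\}\), together with the earlier boxes, still leaves a point uncovered. If that fails, I would apply the Dushnik--Miller relation \(\omega_1\to(\omega_1,\omega)^2\) to the comparability graph. That yields either an uncountable antichain, which is the desired discrete surface, or an infinite sequence \(y_{\beta_0}\ge y_{\beta_1}\ge\cdots\) with \(\beta_0<\beta_1<\cdots\). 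The second alternative would then have to be ruled out using Euclidean convergence of such monotone sequences, combined with the choice of \(y_{\beta_k}\) outside the earlier boxes. A cleaner alternative is to first shrink the cover to a countable-rank refinement, as Burke and Moore do, so that backward comparabilities are destroyed in advance.
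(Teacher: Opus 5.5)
Your first implication is correct. $C_x$ is open (indeed clopen) in $X^m$, a discrete surface meets $C_x$ only in $x$, and perfectness of finite Sorgenfrey powers (Heath--Michael, or the case $A=\emptyset$ of Proposition~\ref{prop:perfect-subpara}) finishes the argument. Perfectness is not even needed. If $p\in X^m$ were an accumulation point of a discrete surface $D$, you could choose distinct $d_k\in(D\setminus\{p\})\cap[p,p+1/k)^m$ and pass to a subsequence on which every coordinate is monotone. Each coordinate converges to $p_i$ from above, so it is nonincreasing, and the $d_k$ form a chain; hence $D$ is closed discrete.

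\textbf{The gap.} The converse is not proved. Your recursion only produces a right-separated family: after thinning, $y_\alpha\not\ge y_\beta$ for $\beta<\alpha$. None of your three suggestions removes the backward comparabilities.
\begin{enumerate}
\item The Dushnik--Miller route cannot finish. Its second alternative is an infinite sequence $y_{\beta_0}\ge y_{\beta_1}\ge\cdots$ with $\beta_0<\beta_1<\cdots$, and this is fully consistent with what you have built. Each $y_{\beta_l}$ is automatically outside $U_{\beta_k}$ for $k<l$, and the sequence simply converges in $\R^m$ to its coordinatewise infimum. The relation $\omega_1\to(\omega_1,\omega)^2$ only promises a countable homogeneous chain, and countable chains genuinely occur.
\item The strengthened recursion is unjustified. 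Lower cones are not members of the cover and can swallow the remainder: if $X$ has a largest element $M$ and $y_0=(M,\ldots,M)$, the lower cone of $y_0$ is all of $X^m$.
\item The ``countable-rank refinement'' is left unspecified.
\end{enumerate}

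\textbf{How to close it.} There are two ways.

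First route. The paper does not prove this proposition (it quotes Burke--Moore), but its own tools give a short argument. Since $X^m$ is a subspace of $H(\emptyset)^m$, it is subparacompact by Lemma~\ref{lem:hereditary-subpara}. Lemma~\ref{lem:subpara-extent} then turns non-Lindel\"ofness into an uncountable closed discrete set $D\subseteq X^m$. Discreteness isolates each point of $D$ from \emph{all} other points, not only from later ones. Choose $n_x$ with $[x,x+1/n_x)^m\cap D=\{x\}$ and apply the pigeonhole principle. This gives $n$ and an uncountable $D_0\subseteq D$, lying in a cube of diameter $<1/n$, with $n_x=n$ on $D_0$. Distinct $x,z\in D_0$ then satisfy both $z\notin[x,x+1/n)^m$ and $x\notin[z,z+1/n)^m$, which yields indices with $z_i<x_i$ and $x_j<z_j$. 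This is the argument in the proof of Theorem~\ref{thm:countable-power-reduction}.

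Second route: your recursion can be completed elementarily. After your thinning, every point of $P=\{y_\alpha\}$ has only countably many points of $P$ coordinatewise above it, all with earlier indices. Every uncountable $P\subseteq\R^m$ with this property contains an uncountable antichain, by induction on $m$.
\begin{itemize}
\item If some fiber $\{y\in P:y_i=a\}$ is uncountable, it lies in a hyperplane identified with $\R^{m-1}$ and inherits the property. The induction hypothesis gives an antichain there, which is also an antichain in $P$.
\item Otherwise, thin $P$ until all coordinate projections are injective. Let $W$ be the union of the open orthants $\{z:z_i>p_i\text{ for all }i\}$ with $p\in P$. Since $\R^m$ is hereditarily Lindel\"of, countably many of these orthants cover $W$. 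Each orthant contains only countably many points of $P$, so $P\setminus W$ is uncountable. No two points of $P\setminus W$ are strictly comparable in every coordinate, so by injectivity no two are comparable at all.
\end{itemize}
For $m=1$ all fibers are singletons, and the argument shows that no uncountable such $P$ exists, so no separate base case is needed. The resulting antichain is an uncountable discrete surface.
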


A space is subparacompact if every open cover has a \(\sigma\)-locally finite
closed refinement.  By Burke's characterization, this is equivalent to the
usual formulation in terms of \(\sigma\)-discrete closed refinements
\cite[Theorem~1.2]{Burke1969}.  A space is perfect if every closed subset is a
\(G_\delta\)-set, equivalently if every open subset is an \(F_\sigma\)-set.
A space is perfectly subparacompact if it is both perfect and
subparacompact \cite[Definition~2.2]{LinLi2025}.  This meaning of
``perfect'' is distinct from a perfect subset of a Euclidean space, which
means a closed subset with no isolated points.

\begin{proposition}[{\cite[Theorem~3.23]{LinLi2025}}]
\label{prop:perfect-subpara}
For every \(A\subseteq\R\), the countable power \(H(A)^\omega\) is perfectly
subparacompact.
\end{proposition}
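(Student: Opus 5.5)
Since this proposition is quoted from Lin and Li, I would not re-derive it from scratch. I would reconstruct the argument along the following lines, using only that the Hattori topology refines the Euclidean one and that every point has a local base of Euclidean boxes that are one-sided in the \(B\)-coordinates.

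\emph{Perfectness, finite powers.} Fix \(n\) and an open \(W\subseteq H(A)^n\). For \(x\in W\) choose a rational \(\varepsilon\) and the canonical box \(\prod_{i}I_i(x,\varepsilon)\subseteq W\). Here \(I_i=(x_i-\varepsilon,x_i+\varepsilon)\) if \(x_i\in A\), and \(I_i=[x_i,x_i+\varepsilon)\) if \(x_i\in B\). Group the points of \(W\) by the pair \((\varepsilon,F)\), where \(F=\{i: x_i\in B\}\). For fixed \((\varepsilon,F)\), let \(W_{\varepsilon,F}\) be the set of such points. I would show it is a countable union of Hattori-closed sets by shrinking the boxes to closed ones. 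The sets
\[
\Bigl\{x\in W_{\varepsilon,F}:\ \prod_i [x_i-\varepsilon/2\cdot\chi_{i\notin F},\,x_i+\varepsilon/2]\subseteq W\Bigr\}
\]
should be Hattori-closed. The argument is that a Hattori limit \(y\) of such points is approached from the right-upper side in the \(B\)-coordinates, so the closed half-boxes still fit inside \(W\). Taking the union over rational \(\varepsilon\) and the finitely many \(F\) gives \(W\) as an \(F_\sigma\). The delicate point is that a limit \(y\) may have a different type pattern \(F'\) than the approximating points. I would handle this by allowing the closed pieces to be indexed by \(F\) alone and checking closedness directly from the one-sided neighbourhoods of \(y\).

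\emph{Passage to \(\omega\).} For open \(W\subseteq H(A)^\omega\), let \(W_n\) be the union of the basic open sets contained in \(W\) that depend only on the first \(n\) coordinates, viewed as an open subset of \(H(A)^n\). Then \(W=\bigcup_n \pi_n^{-1}(W_n)\). Each \(W_n\) is \(F_\sigma\) by the finite case, and preimages of closed sets under the continuous projections \(\pi_n\) are closed. Hence \(W\) is \(F_\sigma\).

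\emph{Subparacompactness.} Since the space is perfect, it suffices by Burke's characterization to produce a \(\sigma\)-discrete closed refinement of a cover \(\mathcal U\); I would get it by splitting off a metrizable part. Consider the Euclidean interiors \(\operatorname{int}_E U\), \(U\in\mathcal U\), in \(\R^\omega\). Their union \(G\) is Euclidean-open, hence metrizable and paracompact. It therefore admits a \(\sigma\)-discrete (in the Euclidean, hence Hattori, sense) refinement by sets that are Euclidean-closed in \(G\). Moreover \(G\) is an Euclidean \(F_\sigma\), which lets us make these sets closed in the whole space. The remainder \(R=X\setminus G\) is Hattori-closed. Every point of \(R\) lies in a set of \(\mathcal U\) only "one-sidedly": it is the lower corner, in some nonempty set of \(B\)-coordinates, of a box in \(\mathcal U\). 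I would then induct on the pattern of one-sided coordinates, stratifying by finite support and by rational sizes as above. The step I expect to be hardest is this induction: showing that, within a stratum, the corner boxes can be thinned to a discrete family. First I would try the Sorgenfrey trick of choosing rational box sizes and showing that points of a stratum with a common size form a family whose closed half-boxes are discrete. That gives a countable, hence \(\sigma\)-discrete, closed refinement of \(\mathcal U\) on \(R\).
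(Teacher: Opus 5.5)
The paper gives no proof of this proposition; it quotes it from Lin and Li. Your sketch therefore has to stand on its own, and it breaks down at exactly the two steps that carry the content.

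Two parts are fine. The reduction of perfectness from $H(A)^\omega$ to the finite powers via $W=\bigcup_n\pi_n^{-1}(W_n)$ is correct. So is the treatment of the Euclidean-open part $G$: intersecting a $G$-discrete closed refinement with a closed exhaustion of $G$ gives closed families that are discrete in the whole space.

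\textbf{Perfectness of the finite powers.} This step fails, because the displayed sets are not Hattori-closed. Take $A=\emptyset$, $n=1$, $W=(0,1)$, $F=\{1\}$, $\varepsilon=1/2$, and let $W_{\varepsilon,F}$ be the set of all points whose canonical $\varepsilon$-box lies in $W$. Your set is then $(0,1/2]$. But $1/k\to0$ in the Sorgenfrey topology and $0\notin W$.

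Your justification (``approached from the right-upper side, so the closed half-boxes still fit'') has the direction backwards. If $x^{(k)}_i>y_i$ for some $i\in F$, the boxes at the $x^{(k)}$ miss the face $\{z_i=y_i\}$; in particular they miss $y$, so nothing forces $y\in W$. The change of type pattern at the limit is also left unresolved.

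What is missing is a decomposition whose pieces have controlled accumulation. In the pure Sorgenfrey case one can proceed as follows.
\begin{enumerate}
\item Group the points $x$ of $W$ by a minimal set $J$ of coordinates such that some box about $x$ inside $W$ is one-sided exactly in the coordinates of $J$ and Euclidean in the others, and by a rational size.
\item Two nearby points of one group cannot dominate each other in the $J$-coordinates unless they agree there.
\item Hence every Sorgenfrey-convergent sequence in a group is eventually constant in those coordinates, and this is what makes each group closed.
\end{enumerate}
In $H(A)^n$, the one-sided coordinates of such points may converge to points of $A$ from either side, and that case needs an additional argument.

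\textbf{Subparacompactness.} The same hole appears at the step you yourself call hardest, and the ``Sorgenfrey trick'' you propose for it is false. Let $A=\emptyset$ and $L=\{(t,-t):t\in\R\}$. Cover the plane by $\R^2\setminus L$ together with the boxes $[t,t+1)\times[-t,-t+1)$. Pulled back along the projection onto the first two coordinates, this becomes a cover of $H(\emptyset)^\omega$ with the same behaviour.

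Here $G=\R^2\setminus L$ and $R=L$, and uncountably many points of $R$ lie in one stratum. The closed half-boxes $[t,t+\varepsilon]\times[-t,-t+\varepsilon]$ and $[t',t'+\varepsilon]\times[-t',-t'+\varepsilon]$ both contain $(t',-t)$ whenever $t\le t'\le t+\varepsilon$. So they are not even disjoint, let alone discrete. Moreover, every member of the cover meets $L$ in at most one point, so any refinement covering $R$ has cardinality $\mathfrak c$. The claimed ``countable, hence $\sigma$-discrete'' refinement therefore cannot exist.

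What is needed is a refinement on $R$ assembled from the strata themselves. Its discreteness or local finiteness would come from an accumulation argument of the kind just described, with $J$ now minimal with respect to the cover rather than a single open set. Perfectness does let you ignore closedness here: a $\sigma$-discrete refinement becomes closed after intersecting each piece with the members of a closed exhaustion of its chosen element of $\mathcal U$ and taking closures.

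That accumulation argument, including its Hattori-specific part, is the substance of the cited theorem. As written, your proposal establishes neither perfectness of the finite powers nor subparacompactness.
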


\begin{lemma}[{\cite[Lemma~G]{GruenhageNyikos1993}}]
\label{lem:perfect-subpara-hereditary}
Every perfectly subparacompact space is hereditarily subparacompact.
\end{lemma}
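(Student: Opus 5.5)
The plan is to prove the statement in three reductions: closed subspaces, then open subspaces (this is where perfectness enters), then arbitrary subspaces by a trace argument. Throughout I use Burke's \(\sigma\)-discrete formulation of subparacompactness \cite[Theorem~1.2]{Burke1969}. Fix a perfectly subparacompact space \(X\) and a subspace \(Y\subseteq X\).

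\emph{Step 1: closed subspaces are subparacompact.} Let \(F\subseteq X\) be closed and let \(\{V_s\cap F\}\) be an open cover of \(F\), with each \(V_s\) open in \(X\). Then \(\{V_s\}\cup\{X\setminus F\}\) is an open cover of \(X\). Take a \(\sigma\)-discrete closed refinement \(\bigcup_n\mathcal E_n\) of it in \(X\). The traces \(E\cap F\) of those members \(E\) lying in some \(V_s\) still cover \(F\). They are closed in \(F\), each family \(\{E\cap F:E\in\mathcal E_n\}\) is discrete in \(F\), and every trace lies in some \(V_s\cap F\).

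\emph{Step 2: open subspaces are subparacompact.} Let \(W\subseteq X\) be open. Since \(X\) is perfect, \(W=\bigcup_n F_n\) with each \(F_n\) closed in \(X\). Let \(\mathcal U\) be a cover of \(W\) by sets open in \(W\), hence open in \(X\). For each \(n\), Step 1 gives a \(\sigma\)-discrete closed refinement \(\bigcup_k\mathcal E_{n,k}\) of \(\{U\cap F_n:U\in\mathcal U\}\) in \(F_n\).

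\begin{itemize}
\item Each member is closed in \(F_n\), hence closed in \(X\) and in \(W\).
\item Each \(\mathcal E_{n,k}\) is discrete in \(X\). Points of \(F_n\) are handled by discreteness in \(F_n\). A point outside \(F_n\) has the neighborhood \(X\setminus F_n\), which meets no member.
\item Hence \(\bigcup_{n,k}\mathcal E_{n,k}\) is a countable union of families discrete in \(W\). It covers \(W\) and refines \(\mathcal U\).
\end{itemize}

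\emph{Step 3: arbitrary subspaces.} Let \(\{V_s\cap Y\}\) be an open cover of \(Y\) with each \(V_s\) open in \(X\), and put \(W=\bigcup_s V_s\supseteq Y\). By Step 2, the cover \(\{V_s\}\) of \(W\) has a \(\sigma\)-discrete closed refinement \(\bigcup_n\mathcal E_n\) in \(W\). The traces \(\{E\cap Y\}\) cover \(Y\), are closed in \(Y\), and refine \(\{V_s\cap Y\}\). Each trace family \(\{E\cap Y:E\in\mathcal E_n\}\) is discrete in \(Y\), since a neighborhood in \(W\) meeting at most one member traces to a neighborhood in \(Y\) with the same property. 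So \(Y\) is subparacompact.

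The only real obstacle is Step 2. The point to check is that discreteness relative to the closed set \(F_n\) upgrades to discreteness in the ambient space; this uses exactly that \(F_n\) is closed in \(X\), not merely in \(W\). That is why perfectness is needed, to write \(W\) as a countable union of sets closed in \(X\). Steps 1 and 3 are routine trace arguments.
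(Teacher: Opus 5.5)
Your proof is correct and complete. The paper gives no argument of its own here; it simply quotes \cite[Lemma~G]{GruenhageNyikos1993}. Your three-step reduction is the standard argument behind that citation: closed subspaces, then open subspaces written as $F_\sigma$ sets via perfectness, then an arbitrary $Y$ by tracing a refinement of the open set $\bigcup_s V_s$. It also works verbatim with $\sigma$-locally finite closed refinements, so the appeal to Burke's theorem is optional.
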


\begin{lemma}
\label{lem:hereditary-subpara}
For every \(A\subseteq\R\) and every \(m\in\mathbb N\cup\{\omega\}\),
every subspace of \(H(A)^m\) is subparacompact.
\end{lemma}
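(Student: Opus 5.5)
The plan is to combine Proposition~\ref{prop:perfect-subpara} with Lemma~\ref{lem:perfect-subpara-hereditary}. First treat the case \(m=\omega\). By Proposition~\ref{prop:perfect-subpara}, the countable power \(H(A)^\omega\) is perfectly subparacompact. Lemma~\ref{lem:perfect-subpara-hereditary} then shows that \(H(A)^\omega\) is hereditarily subparacompact, so every subspace of \(H(A)^\omega\) is subparacompact.

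For finite \(m\in\mathbb N\), I will embed \(H(A)^m\) as a subspace of \(H(A)^\omega\). Fix a point \(c\in\R\) and consider the map
\[
x=(x_1,\ldots,x_m)\;\longmapsto\;(x_1,\ldots,x_m,c,c,\ldots).
\]
This is a homeomorphism of \(H(A)^m\) onto the subspace \(H(A)^m\times\{(c,c,\ldots)\}\) of \(H(A)^\omega\). Indeed, a slice of a product at a fixed point of the remaining factors is always homeomorphic to the product of the free factors, and this uses nothing about the Hattori topology. Consequently every subspace \(Y\subseteq H(A)^m\) is homeomorphic to a subspace of \(H(A)^\omega\), and is therefore subparacompact by the case \(m=\omega\).

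The only point that needs care is that subparacompactness is a topological invariant, so it transfers along the embedding. This is immediate: a homeomorphism carries open covers to open covers, and it carries \(\sigma\)-locally finite closed refinements to \(\sigma\)-locally finite closed refinements. I do not expect any real obstacle, since the substantive content is already contained in the cited results of Lin--Li and Gruenhage--Nyikos.
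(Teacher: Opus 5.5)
Your proof is correct and follows the paper's own argument: Lin--Li's result (Proposition~\ref{prop:perfect-subpara}) makes \(H(A)^\omega\) perfectly subparacompact, Gruenhage--Nyikos's lemma makes it hereditarily subparacompact, and each finite power embeds in \(H(A)^\omega\) as a slice. Your explicit check of the slice embedding and of the topological invariance of subparacompactness spells out what the paper states without detail.
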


\begin{proof}
Proposition~\ref{prop:perfect-subpara} and
Lemma~\ref{lem:perfect-subpara-hereditary} show that every subspace of
\(H(A)^\omega\) is subparacompact.  Every finite power is homeomorphic to a
subspace of \(H(A)^\omega\), which proves the finite cases.
\end{proof}

\begin{lemma}
\label{lem:subpara-extent}
A \(T_1\) subparacompact space is Lindel\"of if and only if every closed
discrete subspace is countable.
\end{lemma}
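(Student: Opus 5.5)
The forward direction is immediate.  If \(X\) is Lindel\"of and \(D\subseteq X\) is closed and discrete, then \(D\) is Lindel\"of as a closed subspace.  Its cover by singletons is open in the discrete topology of \(D\), so it has a countable subcover and \(D\) is countable.  Neither \(T_1\) nor subparacompactness is needed for this direction.

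For the converse, assume that \(X\) is \(T_1\) and subparacompact and that every closed discrete subspace is countable.  Let \(\mathcal U\) be an open cover of \(X\).  By Burke's characterization \cite[Theorem~1.2]{Burke1969}, \(\mathcal U\) has a \(\sigma\)-discrete closed refinement \(\mathcal F=\bigcup_{n}\mathcal F_n\), where each \(\mathcal F_n\) is a discrete family of closed sets.  It suffices to show that each \(\mathcal F_n\) has only countably many nonempty members.  Once this is known, we choose for every nonempty \(F\in\mathcal F\) some \(U_F\in\mathcal U\) with \(F\subseteq U_F\).  This gives countably many members of \(\mathcal U\), and they cover \(X\) because \(\mathcal F\) covers \(X\).

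The key step is the countability of each discrete family \(\mathcal F_n\).  Pick one point \(x_F\in F\) for each nonempty \(F\in\mathcal F_n\), and let \(D\) be the set of these points.  Because \(\mathcal F_n\) is discrete, every \(x\in X\) has an open neighbourhood \(W\) meeting at most one member \(F_0\) of \(\mathcal F_n\).  Hence \(W\cap D\subseteq\{x_{F_0}\}\).

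\(D\) is closed.  Take \(x\notin D\).  The set \(W\setminus\{x_{F_0}\}\) is open by \(T_1\), it contains \(x\), and it misses \(D\).

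\(D\) is discrete.  Each \(x_F\) has a neighbourhood \(W\) with \(W\cap D\subseteq\{x_F\}\).

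So \(D\) is closed discrete and therefore countable by hypothesis.  Distinct members of a discrete family are disjoint, since a point of an intersection would have every neighbourhood meeting two members.  Therefore \(F\mapsto x_F\) is injective, and \(\mathcal F_n\) has only countably many nonempty members.

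The main subtlety is the refinement used.  One should work with the \(\sigma\)-discrete form of subparacompactness rather than the \(\sigma\)-locally finite one, so that the choice of points really produces a closed discrete set.  The paper permits this through Burke's equivalence.  With it, the \(T_1\) axiom is what makes \(D\) closed.
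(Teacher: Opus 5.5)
Your argument is correct. It differs from the paper's proof mainly in being self-contained.

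For the converse, the paper gives no direct argument. It renames the hypothesis ``every closed discrete subspace is countable'' as \(\omega_1\)-compactness in the sense of Gruenhage and Nyikos. It then cites their observation \cite[p.~584]{GruenhageNyikos1993} that every subparacompact \(\omega_1\)-compact space is Lindel\"of.

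You instead prove that observation from Burke's \(\sigma\)-discrete characterization \cite[Theorem~1.2]{Burke1969}, which the paper already invokes in its preliminaries. Choosing one point from each member of a discrete closed family gives a closed discrete set, so each \(\mathcal F_n\) is countable. The \(T_1\) axiom enters exactly in showing that this set is closed.

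Your version shows where \(T_1\) is used and does not depend on which definition of \(\omega_1\)-compactness Gruenhage and Nyikos adopt. The paper's version is shorter.

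Your closing caveat is stronger than necessary. The \(\sigma\)-locally finite form also works in a \(T_1\) space. Each point lies in only finitely many members of a locally finite family, so an uncountable family gives uncountably many chosen points. Each point of the space has a neighbourhood containing only finitely many chosen points. By \(T_1\), those finitely many points can be removed, so the chosen set is again closed discrete.
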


\begin{proof}
If the space is Lindel\"of, every closed discrete subspace is countable.
For the converse, the assumption that every closed discrete subspace is
countable means that the space is \(\omega_1\)-compact in the terminology of
Gruenhage and Nyikos.  They observe that every
subparacompact \(\omega_1\)-compact space is Lindel\"of
\cite[p.~584]{GruenhageNyikos1993}.
\end{proof}

Every nonempty Polish space without isolated points contains a subspace
homeomorphic to \(2^\omega\) \cite[Theorem~6.2]{Kechris1995}, and every
uncountable analytic subset of a Polish space contains a nonempty perfect
subset \cite[Theorem~29.1]{Kechris1995}.

\section{The square-to-cube implication}
\label{sec:square-cube}

The lemma below follows from Mycielski's perfect-set theorem for meager
relations \cite[Theorem~19.1]{Kechris1995}.  See also \cite{Mycielski1964}.

\begin{lemma}
\label{lem:simultaneous-perfect-injective}
Let \(m\in\mathbb N\), and let \(C\subseteq\R^m\) be a nonempty compact
perfect set.  Suppose that, for every \(i\in\{1,\ldots,m\}\) and
\(a\in\R\), the fiber
\[
\{x\in C:x_i=a\}
\]
has empty interior in \(C\).  Then there is a nonempty compact perfect set
\(Q\subseteq C\) such that every coordinate projection is one-to-one on
\(Q\).
\end{lemma}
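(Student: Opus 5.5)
We apply Mycielski's theorem \cite[Theorem~19.1]{Kechris1995} to the Polish space \(C\), which is nonempty, compact and perfect, hence Polish without isolated points. The theorem says that if \(R\subseteq C^2\) is meager, then there is a nonempty compact perfect \(Q\subseteq C\) (a copy of \(2^\omega\)) such that \((x,y)\notin R\) for all distinct \(x,y\in Q\). We take
\[
R=\bigcup_{i=1}^m R_i,\qquad R_i=\{(x,y)\in C^2: x_i=y_i\}.
\]
If \(Q\) avoids \(R\) on distinct pairs, then for distinct \(x,y\in Q\) we have \(x_i\ne y_i\) for every \(i\). So every coordinate projection is one-to-one on \(Q\), which is the conclusion.

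It therefore suffices to show that each \(R_i\) is meager in \(C^2\), and then \(R\) is meager as a finite union. First, \(R_i\) is closed in \(C^2\), since it is the preimage of the diagonal of \(\R\) under the continuous map \((x,y)\mapsto(x_i,y_i)\). So it is enough to show that \(R_i\) has empty interior, since a closed set with empty interior is nowhere dense.

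For this, suppose \(U\times V\subseteq R_i\), where \(U,V\) are nonempty open subsets of \(C\). Pick \(x\in U\). Then every \(y\in V\) satisfies \(y_i=x_i\), so \(V\subseteq\{y\in C:y_i=x_i\}\). This contradicts the hypothesis that this fiber has empty interior in \(C\). Alternatively, one can use the Kuratowski–Ulam theorem: each vertical section \(\{y:(x,y)\in R_i\}\) is a closed fiber with empty interior, hence nowhere dense, so \(R_i\) is meager. Either argument works, and the first is the more elementary.

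The main point needing care is the exact form of Mycielski's theorem being quoted. We use the two-place version for meager \(R\subseteq X^2\) in a nonempty perfect Polish space \(X\), which yields a Cantor set \(Q\) all of whose distinct pairs avoid \(R\). If the cited statement is phrased for comeager sets instead, we apply it to the complement \(C^2\setminus R\), which is comeager. Since \(R\) is closed, that complement is even open dense, so the hypotheses are met without extra work. We also note that \(Q\) is compact and perfect as a homeomorph of \(2^\omega\) embedded in \(C\), so it has the required form.
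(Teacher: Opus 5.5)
Your proposal is correct and follows essentially the same route as the paper. Both take $R=\bigcup_i R_i$ with $R_i=\{(x,y)\in C^2:x_i=y_i\}$, show that each $R_i$ is closed with empty interior via the fiber hypothesis (you fix $x\in U$ where the paper fixes $y\in V$, which is the same argument), and then apply Mycielski's theorem to obtain $Q$ with $(Q^2\setminus\Delta_Q)\cap R=\emptyset$.
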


\begin{proof}
For each \(i\in\{1,\ldots,m\}\), let
\[
R_i=\{(x,y)\in C^2:x_i=y_i\}.
\]
The relation \(R_i\) is closed in \(C^2\).  It has empty interior.  If
nonempty relatively open sets \(U,V\subseteq C\) satisfied
\(U\times V\subseteq R_i\), then, for any fixed \(y\in V\), the set
\(U\) would be contained in the coordinate fiber
\(\{x\in C:x_i=y_i\}\), contrary to the hypothesis.  Thus every
\(R_i\) is nowhere dense, and \(R=\bigcup_{i=1}^mR_i\) is meager in
\(C^2\).  The compact metric space \(C\) is Polish and has no isolated
points.  Mycielski's theorem gives a nonempty perfect set \(Q\subseteq C\).
Since \(Q\) is closed in the compact set \(C\), it is compact.  Put
\(\Delta_Q=\{(x,x):x\in Q\}\).  Then
\[
(Q^2\setminus\Delta_Q)\cap R=\emptyset.
\]
Hence distinct points of \(Q\) differ in every coordinate, so each
coordinate projection is one-to-one on \(Q\).
\end{proof}

\begin{theorem}
\label{thm:zfc-square-cube}
For every \(A\subseteq\R\),
\[
H(A)^2\,\text{is Lindel\"of}
\quad\Longrightarrow\quad
H(A)^3\,\text{is Lindel\"of}.
\]
\end{theorem}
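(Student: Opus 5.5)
The plan is to argue by contradiction. Assume \(H(A)^2\) is Lindel\"of, so that \(B\) contains no copy of \(2^\omega\) by Theorem~\ref{thm:square}, and suppose \(H(A)^3\) is not Lindel\"of. Every subspace of \(H(A)^3\) is subparacompact by Lemma~\ref{lem:hereditary-subpara}, and the space is \(T_1\). So Lemma~\ref{lem:subpara-extent} yields an uncountable closed discrete set \(D\subseteq H(A)^3\).

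\textbf{Step 1: thin \(D\) to injective projections.} For \(a\in\R\), the fiber \(D\cap\{x:x_i=a\}\) is closed discrete in the slice \(\{x:x_i=a\}\). That slice is homeomorphic to a closed subspace of \(H(A)^2\), so it is Lindel\"of and the fiber is countable. A transfinite recursion of length \(\omega_1\) then gives an uncountable \(D'\subseteq D\) on which all three coordinate projections are one-to-one: each chosen point excludes only countably many others. The set \(D'\) is again closed discrete.

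\textbf{Step 2: pass to a perfect set.} Let \(C\) be the set of Euclidean condensation points of \(D'\) in \(\R^3\). It is closed, it has no isolated points, and all but countably many points of \(D'\) lie in \(C\). Intersecting with a suitable closed ball gives a nonempty compact perfect piece. I then want to apply Lemma~\ref{lem:simultaneous-perfect-injective}, which requires every coordinate fiber of \(C\) to have empty interior in \(C\).

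Suppose instead that a relatively open \(U\subseteq C\) lies in a plane \(\{x_i=a\}\). Then the uncountably many points of \(D'\) accumulating at \(U\) have their \(i\)-th coordinates converging to \(a\) from off the plane. Using injectivity of \(\pi_i\) on \(D'\) and closed discreteness at points of \(U\), I would derive a contradiction, shrinking \(C\) further if needed. This yields a compact perfect \(Q\subseteq C\) on which \(\pi_1,\pi_2,\pi_3\) are one-to-one. Each \(\pi_i(Q)\) is then a Cantor set, so \(\pi_i(Q)\not\subseteq B\). More precisely, \(\pi_i(Q)\cap B\) contains no perfect set.

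\textbf{Step 3: exploit the one-sided bases.} Fix \(q\in Q\). If all coordinates of \(q\) were in \(A\), a Euclidean box around \(q\) would meet \(D'\) finitely often, contradicting condensation. In general, a box that is half-open to the right in the coordinates \(i\) with \(q_i\in B\) and open elsewhere meets \(D'\) in finitely many points. So near \(q\), almost all points of \(D'\) satisfy \(x_i<q_i\) for some \(i\) with \(q_i\in B\).

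This gives one-sided Pareto-type conditions. For \(i=1,2,3\), let \(M_i\) be the set of \(q\in Q\) that are locally Pareto-maximal, relative to nearby points of \(Q\) (equivalently of \(D'\)), in the two coordinates other than \(i\). The plan is to show two things:
\begin{itemize}
\item[(a)] Each \(Q\setminus M_i\) is countable. The points failing maximality have \(q_j\in B\) for the relevant coordinates, and injectivity of the projections turns that failure set into a set whose \(\pi_j\)-image is a subset of \(B\). One then argues that this image is countable, using that \(B\) contains no Cantor set together with the analytic structure of the failure set.
\item[(b)] \(M_1\cap M_2\cap M_3\) is countable. A point that is locally maximal in all three coordinate pairs is isolated in an order-theoretic sense, and only countably many such points fit, by a rational-box counting argument.
\end{itemize}
Since \(Q\) is uncountable, (a) and (b) contradict each other.

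\textbf{Main obstacle.} The hardest step will be (a): making the failure sets definable enough, for example Borel or analytic inside \(Q\), that ``no Cantor subset of \(B\)'' forces them to be countable. I would first try to describe them via countably many rational boxes, so that they become \(F_\sigma\) or \(G_\delta\) subsets of \(Q\) projecting injectively into \(B\). Then \cite[Theorem~29.1]{Kechris1995} applies. The empty-interior check in Step 2 is a secondary difficulty.
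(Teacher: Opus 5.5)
\textbf{Verdict: same approach as the paper, with a genuine but easily repaired gap in step (a).} Your outline matches the paper's argument. You thin $D$ to injective projections, take the condensation set, apply Lemma~\ref{lem:simultaneous-perfect-injective} to get $Q$, and derive the one-sided shadow condition. You then show the three local Pareto-maximality sets $M_i$ are cocountable in $Q$. Finally, the pigeonhole principle on the signs of three nonzero coordinate differences makes $M_1\cap M_2\cap M_3$ countable.

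The gap is in (a), the step carrying the proof: you aim it at the wrong coordinate. Failure of Pareto maximality in the pair $\{j,k\}$ tells you nothing about $q_j$ or $q_k$. For a point with $q_i\in B$ and $q_j,q_k\in A$, the shadow condition does not constrain that pair at all. What failure forces is $q_i\in B$ for the \emph{omitted} coordinate $i$. The argument is as follows.
\begin{itemize}
\item If $q\notin M_i$, there are $y\in Q\setminus\{q\}$ arbitrarily close to $q$ with $y_j>q_j$ and $y_k>q_k$. These inequalities are strict because the projections are injective on $Q$.
\item Each such $y$ is a condensation point of $D'$. So there are $d\in D'$ near $q$ satisfying the same two strict inequalities.
\item Your shadow condition then produces an index $l$ with $q_l\in B$ and $d_l<q_l$, and necessarily $l=i$.
\end{itemize}
Only this direction of the passage from $Q$ to $D'$ is needed. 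Your ``equivalently of $D'$'' is unjustified, since $Q$ may miss $D'$ entirely. Thus $\pi_i$ maps $Q\setminus M_i$ into $B$. This comes from the shadow condition together with $I_B(q)\ne\emptyset$, not from injectivity; injectivity only makes the inequalities strict and transfers countability back.

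Definability is not a real obstacle. Put $R_n=\{(x,y)\in Q^2:0<\|y-x\|<1/n,\ y_j\ge x_j,\ y_k\ge x_k\}$, which is $\sigma$-compact. Then $Q\setminus M_i=\bigcap_n\operatorname{pr}_1(R_n)$ is Borel. Since $\pi_i|_Q$ is a homeomorphism onto a compact set, $\pi_i[Q\setminus M_i]$ is a Borel subset of $B$. It is therefore countable by the perfect set theorem and Theorem~\ref{thm:square}. Repaired this way, your route is slightly leaner than the paper's. The paper instead proves the contrapositive inclusion $\{x\in Q:x_i\in A\}\subseteq M_i\cup E_j\cup E_k$, using countable right-gap sets, and then shows that $M_i$ meets every perfect subset of $Q$.

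Step~2 is also muddled, though the fix is immediate. The points of $D'$ near a relatively open $U\subseteq C$ do not approach from off the plane. Since $D'\setminus C$ is countable and every point of $C$ is a condensation point, each nonempty relatively open subset of $C$ contains uncountably many points of $D'$. If $U\subseteq\{x:x_i=a\}$, this contradicts injectivity of $\pi_i$ on $D'$ at once. No appeal to closed discreteness or further shrinking is needed. Separately, intersecting $C$ with a closed ball can create isolated points, for instance at a point of tangency. Instead, bound $D'$ before forming $C$, as the paper does: some bounded box contains uncountably many points of $D'$.
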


\begin{proof}
Put \(B=\R\setminus A\), and suppose that the square is Lindel\"of while the
cube is not.  Lemmas~\ref{lem:hereditary-subpara} and~\ref{lem:subpara-extent}
give an uncountable closed discrete set
\(D\subseteq H(A)^3\).

For \(i\in\{1,2,3\}\), let \(\pi_i\) denote the \(i\)-th coordinate
projection.  For \(t\in\R\), the fiber
\[
D\cap\pi_i^{-1}(\{t\})
\]
is a closed discrete subspace of the closed slice
\(\pi_i^{-1}(\{t\})\cong H(A)^2\).  Every such fiber is countable.  Hence the
range of each coordinate projection on any uncountable subset of \(D\) is
uncountable.  Successive thinning of \(D\) along the three coordinate
projections, with one representative chosen from every nonempty fiber at
each stage, gives an uncountable set \(D^*\subseteq D\) whose three
coordinate projections are one-to-one.
Every subset of a closed discrete subspace is closed and discrete in the
ambient space, so \(D^*\) is still closed discrete.  Since \(\R^3\) is the
union of countably many bounded boxes, after a further thinning \(D^*\) may
be assumed bounded.

Let \(C\) be the set of Euclidean condensation points of \(D^*\).  Since
\(\R^3\) is second countable, the points of \(D^*\) that are not
condensation points of \(D^*\) form a countable set.  Each such point lies in
a member of a fixed countable base whose intersection with \(D^*\) is
countable.  Therefore
\[
D^*\setminus C
\]
is countable, so \(C\) is nonempty.  If \(p\notin C\), there is a
Euclidean open neighborhood \(U\ni p\) for which \(U\cap D^*\) is
countable.  Every point of \(U\) is then outside \(C\), so
\(\R^3\setminus C\) is open and \(C\) is closed.  Every point of
\(C\) lies in \(\overline{D^*}^{\R^3}\).  Since \(D^*\) is bounded,
so is \(C\).  Thus \(C\) is compact.  It has no isolated points.  If \(p\in C\) were isolated in \(C\),
a Euclidean neighborhood \(U\) with \(U\cap C=\{p\}\) would meet \(D^*\)
uncountably, whereas
\[
U\cap D^*\subseteq(D^*\setminus C)\cup\{p\}
\]
would be countable.  Thus \(C\) is a nonempty perfect subset of \(\R^3\).
The same argument shows that every nonempty relatively open subset of \(C\)
meets
\(D^*\cap C\) uncountably.

For \(i\in\{1,2,3\}\) and \(a\in\R\), the coordinate fiber
\[
F_i(a)=\{x\in C:x_i=a\}
\]
has empty interior in \(C\).  Otherwise a nonempty relatively open subset of
\(C\) contained in \(F_i(a)\) would meet \(D^*\cap C\) uncountably, contrary
to injectivity of \(\pi_i|_{D^*}\).  Lemma~\ref{lem:simultaneous-perfect-injective}
therefore gives a nonempty compact perfect set \(Q\subseteq C\) on which all
three coordinate projections are one-to-one.

For \(p\in Q\), put
\[
I_B(p)=\{i\in\{1,2,3\}:p_i\in B\}.
\]
Since \(D^*\) is closed discrete, choose a basic Hattori box \(U_p\) about
\(p\) such that \(U_p\cap D^*\subseteq\{p\}\).  Choose
\(\varepsilon_p>0\) so small that, whenever \(z\in\R^3\) satisfies
\(\|z-p\|<\varepsilon_p\), each coordinate \(z_i\) with
\(i\notin I_B(p)\) lies in the corresponding two-sided factor of \(U_p\),
and each coordinate \(z_i\) with \(i\in I_B(p)\) lies below the upper
endpoint of the corresponding one-sided factor of \(U_p\).  Since
\(U_p\cap(D^*\setminus\{p\})=\emptyset\), every
\(d\in D^*\setminus\{p\}\) with \(\|d-p\|<\varepsilon_p\) satisfies
\begin{equation}
\label{eq:strict-shadow-D}
d_i<p_i
\quad\text{for some }i\in I_B(p).
\end{equation}
Because \(p\) is a condensation point of \(D^*\), there is
\(d\in D^*\setminus\{p\}\) with \(\|d-p\|<\varepsilon_p\).
Equation~\eqref{eq:strict-shadow-D} then forces \(I_B(p)\) to be nonempty.

There is a Euclidean neighborhood \(W_p\) of \(p\) such that no
\(q\in Q\cap W_p\setminus\{p\}\) satisfies
\begin{equation}
\label{eq:strict-shadow-Q}
q_i>p_i
\quad\text{for every }i\in I_B(p).
\end{equation}
If no such neighborhood existed, choose \(q\in Q\setminus\{p\}\) close
enough to \(p\) that \(\|q-p\|<\varepsilon_p/2\) and
\eqref{eq:strict-shadow-Q} holds.  Let
\[
\eta=\min\{q_i-p_i:i\in I_B(p)\}>0.
\]
Since \(q\in C\), it is a condensation point of \(D^*\).  Hence there is
\(d\in D^*\setminus\{p\}\) with
\[
\|d-q\|<\min\{\eta/2,\varepsilon_p/2\}.
\]
Then \(d_i>p_i\) for every \(i\in I_B(p)\), while
\(\|d-p\|<\varepsilon_p\), contradicting
\eqref{eq:strict-shadow-D}.

For \(i=1,2,3\), put
\[
A_i=\{x\in Q:x_i\in A\}.
\]
Every \(A_i\) meets every nonempty perfect subset of \(Q\).  If a nonempty
perfect \(P\subseteq Q\) were disjoint from \(A_i\), then \(P\) would be
compact because it is closed in the compact set \(Q\).  The continuous
injection \(\pi_i|_P\) would therefore be a homeomorphism onto a nonempty
compact perfect subset of \(B\).  By \cite[Theorem~6.2]{Kechris1995}, such a
set contains a subspace homeomorphic to \(2^\omega\), contrary to
Theorem~\ref{thm:square}.

For \(i\in\{1,2,3\}\), let \(E_i\) be the set of points \(x\in Q\) for
which \(x_i\) has a right-hand gap in \(\pi_i[Q]\), meaning that
\(\pi_i[Q]\cap(x_i,x_i+\varepsilon)=\emptyset\) for some
\(\varepsilon>0\).  Each \(E_i\) is countable.  Apart from a
possible maximum, every right-gap point of the compact perfect set
\(\pi_i[Q]\) is the left endpoint of a component of its complement, and
there are only countably many such components.

For distinct \(i,j,k\in\{1,2,3\}\), let \(M_i\) consist of the points
\(x\in Q\) that are local Pareto maxima in coordinates \(j,k\).  Thus
\(x\in M_i\) if there is \(\delta>0\) such that
\[
\nexists\, y\in Q\setminus\{x\}\text{ such that }
\|y-x\|<\delta,\qquad y_j\ge x_j,\qquad y_k\ge x_k.
\]
The set \(M_i\) is Borel.  For \(n\in\mathbb N\), let \(M_{i,n}\) be
defined by the same condition with \(\|y-x\|\le1/n\).  For
\(r\in\mathbb N\), the set
\[
R_{n,r}=\{(x,y)\in Q^2:1/r\le\|y-x\|\le1/n,
            \ y_j\ge x_j,\ y_k\ge x_k\}
\]
is compact.  Write \(\operatorname{pr}_1:Q^2\to Q\) for projection onto
its first factor.  Then \(\operatorname{pr}_1(R_{n,r})\) is compact, and
\[
Q\setminus M_{i,n}
 =\bigcup_{r\in\mathbb N}\operatorname{pr}_1(R_{n,r}).
\]
Thus \(M_{i,n}\) is a \(G_\delta\) subset of \(Q\), and
\(M_i=\bigcup_{n\in\mathbb N}M_{i,n}\) is Borel.

The required inclusion is
\begin{equation}
\label{eq:AiMi}
A_i\subseteq M_i\cup E_j\cup E_k.
\end{equation}
Take \(x\in A_i\setminus(E_j\cup E_k)\).  If both \(x_j,x_k\in A\), then
\(I_B(x)=\emptyset\), contradicting \(I_B(x)\ne\emptyset\).  Suppose exactly
one of \(x_j,x_k\) lies in \(B\).  By symmetry in \(j\) and \(k\),
suppose \(x_j\in B\).  Then
\(I_B(x)=\{j\}\).  Hence \eqref{eq:strict-shadow-Q} excludes all
sufficiently near points of \(Q\) whose \(j\)-th coordinate is larger than
\(x_j\).  Since
\(x\notin E_j\) and \(\pi_j|_Q\) is a homeomorphism onto its image, points
of \(Q\) with larger \(j\)-th coordinate must approach \(x\), a
contradiction.  Hence \(x_j,x_k\in B\).  Equation
\eqref{eq:strict-shadow-Q} now excludes every sufficiently near \(y\ne x\)
with both \(y_j>x_j\) and \(y_k>x_k\).  Injectivity of \(\pi_j|_Q\) and
\(\pi_k|_Q\) excludes equality for \(y\ne x\), so \(x\in M_i\).  This
proves \eqref{eq:AiMi}.

Let \(P\subseteq Q\) be nonempty and perfect.  The Borel set
\(P\setminus(E_j\cup E_k)\) is uncountable and therefore contains a
nonempty perfect subset \cite[Theorem~29.1]{Kechris1995}.  That perfect
subset meets \(A_i\), so \eqref{eq:AiMi} gives \(M_i\cap P\ne\emptyset\).
Thus \(M_i\) meets every nonempty perfect subset of \(Q\).  Since \(M_i\)
is Borel, the perfect set theorem implies that \(Q\setminus M_i\) is
countable.  Each of \(M_1,M_2,M_3\) is therefore cocountable in \(Q\).

The intersection
\[
M_1\cap M_2\cap M_3
\]
is countable.  For each \(x\) in this intersection, choose \(n(x)\in\mathbb N\) such
that all three local Pareto conditions hold in the Euclidean ball of radius
\(1/n(x)\) about \(x\).  For \(n\in\mathbb N\), let
\[
S_n=\{x\in M_1\cap M_2\cap M_3:n(x)=n\}.
\]
If \(x,y\in S_n\) are distinct, then \(\|x-y\|\ge1/n\).  Otherwise their
three coordinate differences are nonzero, so one of \(x,y\) is larger than
the other in at least two coordinates.  The Pareto condition indexed by the
remaining coordinate is then violated at the point that is smaller in those
two coordinates.  Thus \(S_n\) is \(1/n\)-separated and hence finite in the
compact metric space \(Q\).
Therefore \(M_1\cap M_2\cap M_3=\bigcup_{n\in\mathbb N}S_n\) is
countable, contrary to the fact that the intersection of three cocountable
subsets of the uncountable perfect set \(Q\) is uncountable.
\end{proof}

The separation argument applies the pigeonhole principle to the signs of three
nonzero coordinate differences.

\begin{corollary}
\label{cor:cube-characterization}
For \(A\subseteq\R\) and \(B=\R\setminus A\), the following are equivalent.
\begin{enumerate}
\item \(H(A)^3\) is Lindel\"of.
\item \(H(A)^2\) is Lindel\"of.
\item \(B\) contains no subspace homeomorphic to \(2^\omega\).
\end{enumerate}
\end{corollary}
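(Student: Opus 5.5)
The equivalence of (2) and (3) is exactly Theorem~\ref{thm:square}, and (2) implies (1) is Theorem~\ref{thm:zfc-square-cube}. So the only implication left to prove is that (1) implies (2). I will derive it by realizing \(H(A)^2\) as a closed subspace of \(H(A)^3\) and using the fact that closed subspaces of Lindel\"of spaces are Lindel\"of.

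Fix any \(t\in\R\) and consider the slice
\[
S_t=\{x\in H(A)^3:x_3=t\}=\pi_3^{-1}(\{t\}).
\]
Points are closed in \(H(A)\) because the Hattori topology refines the Euclidean one, so \(\{t\}\) is closed. Since \(\pi_3\) is continuous, \(S_t\) is closed in the product. The map \((x_1,x_2)\mapsto(x_1,x_2,t)\) is a homeomorphism of \(H(A)^2\) onto \(S_t\) with the product subspace topology; this is the standard embedding of a factor product as a slice. This is the same identification \(\pi_i^{-1}(\{t\})\cong H(A)^2\) already used in the proof of Theorem~\ref{thm:zfc-square-cube}.

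Now suppose \(H(A)^3\) is Lindel\"of. Then its closed subspace \(S_t\) is Lindel\"of, and therefore \(H(A)^2\) is Lindel\"of. This gives (1)\(\Rightarrow\)(2) and completes the cycle of implications.

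None of these steps is a real obstacle: all the substantive work is contained in Theorem~\ref{thm:zfc-square-cube} and Theorem~\ref{thm:square}.
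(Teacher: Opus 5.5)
Your proposal is correct and follows the paper's proof: (2)\(\Leftrightarrow\)(3) is Theorem~\ref{thm:square}, and (2)\(\Rightarrow\)(1) is Theorem~\ref{thm:zfc-square-cube}. The only difference is in (1)\(\Rightarrow\)(2): the paper uses the continuous surjective coordinate projection \(H(A)^3\to H(A)^2\), which does not need points to be closed, whereas your closed-slice argument is equally valid.
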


\begin{proof}
The implication (1)\(\Rightarrow\)(2) follows by coordinate projection.
The equivalence of (2) and (3) is Theorem~\ref{thm:square}, and
Theorem~\ref{thm:zfc-square-cube} gives (2)\(\Rightarrow\)(1).
\end{proof}

If \(B\) is analytic, then Corollary~\ref{cor:cube-characterization} and
the perfect set theorem \cite[Theorem~29.1]{Kechris1995} show that \(B\) is
countable exactly when \(H(A)^2\) is Lindel\"of, equivalently when
\(H(A)^3\) is Lindel\"of.  Under these equivalent conditions, \(H(A)\) is
second countable by Lemma~\ref{lem:second-countable}, and hence
\(H(A)^\omega\) is second countable.  Since every Borel subset of \(\R\) is
analytic, the conclusions also hold for Borel \(B\).

For \(A\subseteq\R\), if \(H(A)^2\) is Lindel\"of, then every discrete
subspace of \(H(A)^3\) that is analytic in the Euclidean topology is
countable, and no discrete subspace of the cube
contains a nonempty Euclidean perfect subset.  A nonempty Euclidean perfect
subset of \(\R^3\) contains a compact subspace homeomorphic to \(2^\omega\)
\cite[Theorem~6.2]{Kechris1995}.  That subspace is Euclidean closed and hence
closed in the finer Hattori product topology.  If it lay inside a discrete
subspace, it would be an uncountable closed discrete subspace of the
Lindel\"of cube, which is impossible.  The analytic assertion then follows
from the perfect set theorem \cite[Theorem~29.1]{Kechris1995}.

\section{Affine discreteness in finite powers}
\label{sec:affine}

\begin{lemma}
\label{lem:halfspaces}
Let \(k\in\mathbb N\), let \(q\in\{0,\ldots,k\}\), and let
\(r_1,\ldots,r_q\in\R^k\).  Then
\[
\{u\in\R^k:r_i\cdot u\ge0\text{ for }1\le i\le q\}
\]
contains a nonzero vector.
\end{lemma}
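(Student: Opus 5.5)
The plan is to argue by linear algebra alone, splitting according to whether the homogeneous system \(r_i\cdot u=0\) has a nontrivial solution. Form the \(q\times k\) matrix \(R\) whose rows are \(r_1,\ldots,r_q\), with the convention that for \(q=0\) there are no constraints at all. In that degenerate case any nonzero vector works, and one exists because \(k\ge1\).

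The first case is \(\operatorname{rank}R<k\). This covers every \(q<k\), and also \(q=k\) whenever \(r_1,\ldots,r_k\) are linearly dependent. By rank--nullity the kernel \(\{u:Ru=0\}\) has dimension \(k-\operatorname{rank}R\ge1\). Any nonzero \(u\) in the kernel satisfies \(r_i\cdot u=0\ge0\) for every \(i\), so it lies in the set in question.

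The remaining case is \(q=k\) with \(r_1,\ldots,r_k\) linearly independent. Then \(R\) is an invertible \(k\times k\) matrix. Put \(u=R^{-1}(1,\ldots,1)^{\mathsf T}\). This gives \(r_i\cdot u=1\ge0\) for every \(i\), and \(u\neq0\) because its image \(Ru\) is nonzero. Since \(q\le k\) by hypothesis, these cases are exhaustive.

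There is no genuine obstacle here. The only point needing care is to check that the case split covers every \(q\in\{0,\ldots,k\}\), including the empty system \(q=0\) and the dependent subcase of \(q=k\). The bound \(q\le k\) is exactly what prevents an overdetermined configuration such as \(r\) and \(-r\) together with a spanning set, which could force the cone down to \(\{0\}\).
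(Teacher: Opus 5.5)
Your proof is correct and follows the paper's argument essentially verbatim. The paper also treats \(q=0\) separately, takes a nonzero vector orthogonal to the span of the \(r_i\) when they do not span \(\R^k\) (the kernel of \(R\) in your notation), and otherwise solves \(Ru=(1,\ldots,1)^{\mathsf T}\) for linearly independent rows with \(q=k\). Your explicit checks that \(u\neq0\) and that the cases are exhaustive are small but welcome additions.
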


\begin{proof}
If \(q=0\), the asserted set is \(\R^k\), which contains nonzero vectors.
If the vectors \(r_1,\ldots,r_q\) do not span \(\R^k\), choose a nonzero
vector orthogonal to their span.  Suppose that \(q=k\) and that the vectors
are linearly independent.  Let \(R\) be the matrix whose rows are the
\(r_i\).  There is a vector \(u\) satisfying
\[
Ru=(1,\ldots,1)^{\mathsf T}.
\]
Then \(r_i\cdot u=1\) for every \(i\).
\end{proof}

\begin{theorem}
\label{thm:affine-criterion}
Let \(A\subseteq\R\), put \(B=\R\setminus A\), and let \(k,m\in\mathbb N\)
with \(k\le m\).  Let \(M\) be an \(m\times k\) real matrix of rank
\(k\), and let \(b\in\R^m\).  Define \(F:\R^k\to\R^m\) by
\[
F(t)=b+Mt\qquad(t\in\R^k)
\]
and write \(F_i\) for the \(i\)-th coordinate function of \(F\) and
\(r_i\in\R^k\) for the \(i\)-th row of \(M\).  Put
\[
L=F(\R^k)\subseteq H(A)^m
\]
and, for \(t\in\R^k\),
\[
I(t)=\{i\in\{1,\ldots,m\}:F_i(t)\in B\}.
\]
Call an index \(i\) active at \(t\) if \(i\in I(t)\).  For
\(t\in\R^k\), put
\[
C(t)=\bigcap_{i\in I(t)}\{u\in\R^k:r_i\cdot u\ge0\},
\]
where an empty intersection is understood to be \(\R^k\).  Then \(L\) is
closed in \(H(A)^m\), and \(F(t)\) is isolated in \(L\) if and only if
\begin{equation}
\label{eq:cone-criterion}
C(t)=\{0\}.
\end{equation}
Consequently, \(L\) is discrete if and only if~\eqref{eq:cone-criterion}
holds for every \(t\in\R^k\).
\end{theorem}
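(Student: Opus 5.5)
The plan has three steps: closedness of \(L\), the local description of \(L\) near \(F(t)\), and then the two directions of the cone criterion.

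\emph{Closedness.} The map \(F\) is affine and injective because \(M\) has rank \(k\). Hence \(L\) is a Euclidean-closed affine subspace of \(\R^m\). The Hattori product topology refines the Euclidean topology on \(\R^m\), so \(L\) is closed in \(H(A)^m\).

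\emph{Local description.} Fix \(t\). A basic Hattori box about \(F(t)\) has the following factors:
\begin{itemize}
\item for \(i\notin I(t)\), a two-sided interval \((F_i(t)-\varepsilon,F_i(t)+\varepsilon)\);
\item for \(i\in I(t)\), a half-open interval \([F_i(t),F_i(t)+\varepsilon)\).
\end{itemize}
Write a point of \(L\) as \(F(t+u)=F(t)+Mu\), so its \(i\)-th coordinate is \(F_i(t)+r_i\cdot u\). Then \(F(t+u)\) lies in the box exactly when \(|r_i\cdot u|<\varepsilon\) for all \(i\notin I(t)\), and \(0\le r_i\cdot u<\varepsilon\) for all \(i\in I(t)\). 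Consequently, \(F(t)\) is isolated in \(L\) if and only if there is \(\varepsilon>0\) such that the only \(u\) satisfying these inequalities is \(u=0\).

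\emph{If \(C(t)\neq\{0\}\), then \(F(t)\) is not isolated.} Pick a nonzero \(u\in C(t)\). For small \(s>0\), the vector \(su\) satisfies every inequality: \(r_i\cdot(su)\ge0\) on active indices, and all the quantities \(r_i\cdot su\) are small in absolute value. Injectivity of \(F\) gives \(F(t+su)\ne F(t)\). So every box about \(F(t)\) contains another point of \(L\).

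\emph{If \(C(t)=\{0\}\), then \(F(t)\) is isolated.} This is the step I expect to be the main point, because a small \(\varepsilon\) must exclude all nonzero \(u\), not only those of bounded size. I would use homogeneity. The set \(C(t)\) is a closed cone. Suppose it equals \(\{0\}\). By compactness of the unit sphere, there is \(c>0\) such that every unit vector \(v\) satisfies \(\min_{i\in I(t)} r_i\cdot v\le -c\) or \(\max_i |r_i\cdot v|\ge c\). (When \(I(t)=\emptyset\), this reduces to the fact that \(M\) has trivial kernel.) To verify this, note that the function
\[
v\mapsto \max\Bigl(\max_{i\in I(t)} (-r_i\cdot v),\ \max_i|r_i\cdot v|\Bigr)
\]
is continuous and positive on the sphere. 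Actually, the hypothesis \(C(t)=\{0\}\) gives the stronger fact that \(\max_{i\in I(t)}(-r_i\cdot v)>0\) on the sphere, and compactness turns this into a uniform bound \(c\). Now take any \(u\ne0\) and write \(u=\|u\|v\) with \(v\) a unit vector. Either some active index has \(r_i\cdot u<0\), and then \(F(t+u)\) leaves the box for every \(\varepsilon\). Otherwise, \(u\) lies in \(C(t)\) and so \(u=0\). The compactness remark is therefore only a safety net: the sign condition alone excludes every \(u\ne0\) with any \(\varepsilon\). In short, \(C(t)=\{0\}\) means every nonzero \(u\) violates an active half-space, so any box about \(F(t)\) meets \(L\) only in \(F(t)\).

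\emph{Conclusion.} The equivalence \(F(t)\) isolated in \(L\) \(\iff\) \(C(t)=\{0\}\) holds for each \(t\). Since \(F\) is onto \(L\), the set \(L\) is discrete exactly when \eqref{eq:cone-criterion} holds for every \(t\in\R^k\).
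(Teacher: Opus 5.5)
Your proof is correct and follows the paper's argument: closedness from the Euclidean affine subspace and the finer topology, scaling a nonzero \(u\in C(t)\) into every basic box for non-isolation, and, when \(C(t)=\{0\}\), observing that every \(u\ne0\) violates some active half-space, so that a box with half-open factors \([F_i(t),F_i(t)+\varepsilon)\) on the active coordinates meets \(L\) only in \(F(t)\). The compactness ``safety net'' is superfluous and can be deleted; your remark about \(I(t)=\emptyset\) is also vacuous, since \(k\ge1\) forces \(I(t)\neq\emptyset\) whenever \(C(t)=\{0\}\).
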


\begin{proof}
The set \(L\) is a Euclidean affine subspace of \(\R^m\), hence it is closed
in the Euclidean topology.  The topology of \(H(A)^m\) refines the Euclidean
product topology, so \(L\) is also closed in \(H(A)^m\).

Fix \(t\in\R^k\) and put \(x=F(t)\).  Assume first that
\eqref{eq:cone-criterion} holds.  For each \(i\in I(t)\), choose the
right-hand basic neighborhood \([x_i,x_i+1)\).  In the other coordinates take
the whole space \(H(A)\).  Their product is an open neighborhood \(U\) of
\(x\) in \(H(A)^m\).  If \(F(t+u)\in U\), then
\[
r_i\cdot u\ge0\qquad(i\in I(t)).
\]
Equation~\eqref{eq:cone-criterion} gives \(u=0\).  Thus \(U\cap L=\{x\}\).

Conversely, suppose that \(C(t)\) contains a nonzero vector \(u\).  Let
\(U=\prod_{i=1}^mU_i\) be a basic neighborhood of
\(x\).  If \(x_i\in A\), choose \(\varepsilon_i>0\) such that
\[
(x_i-\varepsilon_i,x_i+\varepsilon_i)\subseteq U_i.
\]
If \(x_i\in B\), choose \(\varepsilon_i>0\) such that
\[
[x_i,x_i+\varepsilon_i)\subseteq U_i.
\]
Since there are only finitely many coordinates, choose \(\lambda>0\) small
enough that
\[
|\lambda r_i\cdot u|<\varepsilon_i\quad(x_i\in A)
\]
and
\[
0\le\lambda r_i\cdot u<\varepsilon_i\quad(x_i\in B).
\]
Hence \(F(t+\lambda u)\in U\).  Since \(M\) has rank \(k\) and \(u\ne0\),
this point differs from \(F(t)\).  Therefore \(F(t)\) is not isolated in
\(L\).
\end{proof}

For affine lines the criterion has a simpler form.

\begin{corollary}
\label{cor:line-criterion}
Let \(A\subseteq\R\), put \(B=\R\setminus A\), let \(m\in\mathbb N\), and
let \(b,v\in\R^m\) with \(v\ne0\).  Put
\[
L=\{b+tv:t\in\R\}\subseteq H(A)^m.
\]
Then \(L\) is discrete if and only if, for every \(t\in\R\), there are
indices \(i,j\in\{1,\ldots,m\}\) such that
\[
v_i>0,\qquad b_i+t v_i\in B,
\]
and
\[
v_j<0,\qquad b_j+t v_j\in B.
\]
\end{corollary}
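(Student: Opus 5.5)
This is the case \(k=1\) of Theorem~\ref{thm:affine-criterion}, so the plan is to specialize that theorem and unpack the cone condition \eqref{eq:cone-criterion}.

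Write \(F(t)=b+tv\) with the \(m\times1\) matrix \(M=v\). Since \(v\ne0\), \(M\) has rank \(1\). Its rows are the scalars \(r_i=v_i\), and the active set is \(I(t)=\{i:b_i+tv_i\in B\}\). Theorem~\ref{thm:affine-criterion} then says that \(L\) is discrete if and only if, for every \(t\in\R\),
\[
C(t)=\bigcap_{i\in I(t)}\{u\in\R:v_iu\ge0\}=\{0\}.
\]

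Next I will compute \(C(t)\) explicitly. Each factor depends only on the sign of \(v_i\):
\begin{itemize}
\item if \(v_i>0\), the factor is \([0,\infty)\);
\item if \(v_i<0\), the factor is \((-\infty,0]\);
\item if \(v_i=0\), the factor is all of \(\R\).
\end{itemize}
The intersection is therefore \(\{0\}\) exactly when both half-lines occur among the factors. That means there is an active index \(i\) with \(v_i>0\) and an active index \(j\) with \(v_j<0\).

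If both occur, the intersection is contained in \([0,\infty)\cap(-\infty,0]=\{0\}\). If, say, no active index has \(v_i<0\), then every factor contains \([0,\infty)\), so \(1\in C(t)\). Symmetrically, if no active index has \(v_i>0\), then \(-1\in C(t)\).

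This is precisely the stated condition: an index \(i\) with \(v_i>0\) and \(b_i+tv_i\in B\), and an index \(j\) with \(v_j<0\) and \(b_j+tv_j\in B\). I do not expect any real obstacle. The only care needed is the convention that the empty intersection is \(\R\), which falls under the "some half-line missing" case, and the observation that coordinates with \(v_i=0\) impose no constraint.
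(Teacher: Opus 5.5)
Your proposal is correct and is the paper's argument: specialize Theorem~\ref{thm:affine-criterion} to \(k=1\) with rows \(r_i=v_i\), and note that \(C(t)=\{0\}\) exactly when the active coefficients include both a positive and a negative number. Your case analysis, including the empty-intersection convention and the zero coefficients, writes out the paper's one-line justification in detail.
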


\begin{proof}
For \(k=1\), the intersection in~\eqref{eq:cone-criterion} is \(\{0\}\) if
and only if the active coefficients contain both a positive and a negative
number.
\end{proof}

\begin{theorem}
\label{thm:cube-affine-barrier}
Let \(A\subseteq\R\) be nonempty and let \(m\in\{1,2,3\}\).  Then
\(H(A)^m\) contains no discrete affine subspace of positive dimension.
\end{theorem}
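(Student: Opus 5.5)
We argue by contradiction using Theorem~\ref{thm:affine-criterion}. Suppose \(L=F(\R^k)\) with \(F(t)=b+Mt\), \(1\le k\le m\le3\), \(M\) of rank \(k\), is discrete in \(H(A)^m\). Then \(C(t)=\{0\}\) for every \(t\in\R^k\). By Lemma~\ref{lem:halfspaces}, any intersection of at most \(k\) half-spaces \(\{r_i\cdot u\ge0\}\) contains a nonzero vector. Hence discreteness forces \(|I(t)|\ge k+1\) for every \(t\). Since \(|I(t)|\le m\le3\), this already rules out \(k=m\). It remains to handle \(k\in\{1,2\}\) with \(k<m\).

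The key step uses nonemptiness of \(A\): we find a parameter \(t\) at which some coordinate of \(F(t)\) lies in \(A\), so that index drops out of \(I(t)\). Pick \(a\in A\). If some row \(r_i\ne0\), the set \(\{t:F_i(t)=a\}\) is a nonempty affine hyperplane in \(\R^k\) (a point when \(k=1\)). Choosing \(t\) in it gives \(i\notin I(t)\), so \(|I(t)|\le m-1\).

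The case analysis then runs as follows.
\begin{itemize}
\item \(k=1\), \(m\le2\): we get \(|I(t)|\le1<2\), a contradiction.
\item \(k=1\), \(m=3\): we need \(|I(t)|\le1\) somewhere. Alternatively, we use Corollary~\ref{cor:line-criterion} directly. At \(t\) with \(F_i(t)=a\), the remaining two active indices must carry coefficients \(v_j>0\) and \(v_l<0\). We then want a second coordinate lying in \(A\) at the same \(t\), which need not happen. Instead, we vary the choice: choose \(i\) so that \(v_i\) has the same sign as some other nonzero coefficient, which is possible by pigeonhole among three coordinates if all are nonzero. After removing \(i\), the remaining active coefficients have a single sign, or fewer than two of them are active. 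If some \(v_i=0\), that coordinate is never useful, and the line effectively lives in two coordinates, where the \(m\le2\) argument applies. This pigeonhole choice is the delicate point.
\item \(k=2\), \(m=3\): at a \(t\) with \(F_i(t)=a\) we get \(|I(t)|\le2=k\), so Lemma~\ref{lem:halfspaces} gives \(C(t)\ne\{0\}\), a contradiction. Rank \(2\) guarantees a nonzero row.
\end{itemize}

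The main obstacle is the line case in \(H(A)^3\), where counting active indices alone is insufficient and the signs of the coefficients must be used. The argument there rests on the pigeonhole among three nonzero coefficients: two of them share a sign, and one of those two can be neutralized by placing its coordinate at \(a\in A\). After that, the remaining active coefficients cannot exhibit both signs, as Corollary~\ref{cor:line-criterion} would require.
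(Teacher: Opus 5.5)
Your treatment of the cases \(k=m\), \(k=2<m=3\), and \(k=1<m=2\) is correct and matches the paper's counting argument via Lemma~\ref{lem:halfspaces}. There is a genuine gap in the line case \(k=1\), \(m=3\) with all three coefficients nonzero, which is where you located the difficulty: your pigeonhole choice goes the wrong way.

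If all three \(v_i\) share a sign, Corollary~\ref{cor:line-criterion} fails at every \(t\) and \(A\) is not needed. So the only relevant case is mixed signs, say \(v_1,v_2>0>v_3\) after relabeling; two negative coefficients is symmetric. You propose to neutralize an index whose coefficient shares its sign with another, that is, \(i\in\{1,2\}\). But at a parameter \(t\) with \(F_1(t)=a\in A\), the surviving indices \(2\) and \(3\) carry \(v_2>0\) and \(v_3<0\). If \(F_2(t),F_3(t)\in B\), the criterion is satisfied at \(t\) and no contradiction arises. Your claim that the remaining active coefficients then have a single sign is false.

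The correct choice is the opposite one: neutralize the index whose sign is unique, here \(3\). Since \(v_3\) is the only negative coefficient, the criterion forces \(F_3(t)=b_3+tv_3\in B\) for every \(t\). Because \(v_3\ne0\), this affine function is onto \(\R\), so some \(t\) gives \(F_3(t)=a\). At that \(t\) only positive coefficients are active, a contradiction.

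Your own observation already yields this. For every neutralized \(i\), the other two coefficients must have opposite signs, which would make three nonzero reals pairwise of opposite sign, and pigeonhole forbids that. This is the paper's argument: one sign class is a singleton, and that coordinate would have to lie in \(B\) along the whole line, forcing \(B=\R\).
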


\begin{proof}
Suppose that \(L=F(\R^k)\subseteq H(A)^m\) is a discrete affine subspace of
positive dimension, with notation as in Theorem~\ref{thm:affine-criterion}.
Zero rows impose no restriction in~\eqref{eq:cone-criterion}, so they may be
discarded when Lemma~\ref{lem:halfspaces} is applied.

If \(k=3\), then necessarily \(m=3\).  At each parameter \(t\),
Lemma~\ref{lem:halfspaces} shows that at least four nonzero active rows would
be needed for~\eqref{eq:cone-criterion}, which is impossible.

Suppose that \(k=2\).  The only case not already excluded by
Lemma~\ref{lem:halfspaces} is \(m=3\).  Then all three rows of
\(M\) must be nonzero and active at every parameter, since otherwise at most
two nonzero active rows would remain and Lemma~\ref{lem:halfspaces} would
contradict~\eqref{eq:cone-criterion}.  Fix one row \(r_i\).
The affine function
\[
t\longmapsto b_i+r_i\cdot t
\]
is onto \(\R\).  Choose \(a\in A\) and \(t\in\R^2\) with
\(b_i+r_i\cdot t=a\).  At this parameter the \(i\)-th row is not active, so
at most two nonzero active rows remain.  Lemma~\ref{lem:halfspaces} contradicts
\eqref{eq:cone-criterion}.

It remains to consider \(k=1\).  By Corollary~\ref{cor:line-criterion}, the
nonzero coefficients of the direction vector must include both signs.  This is
impossible when \(m=1\), so \(m\in\{2,3\}\).  One of the two sign classes
therefore consists of a single coordinate.  That coordinate must take values in \(B\) for every parameter.  The
corresponding affine coordinate function is onto \(\R\), so this forces
\(B=\R\), contrary to \(A\ne\emptyset\).
\end{proof}

For \(A=\emptyset\), the conclusion fails already in the square.  The
anti-diagonal \(\{(t,-t):t\in\R\}\) is a closed discrete affine line in the
Sorgenfrey plane.  Thus the hypothesis \(A\ne\emptyset\) in
Theorem~\ref{thm:cube-affine-barrier} cannot be omitted.

\section{Monotone traces}
\label{sec:monotone}

The affine result is local and linear.  Under Lindel\"ofness of the square,
a perfect-kernel argument gives an analogous four-coordinate obstruction for
monotone traces without assuming an interval parameter or continuity at the
outset.

\begin{theorem}
\label{thm:monotone-trace-barrier}
Let \(A\subseteq\R\), put \(B=\R\setminus A\), let \(m\in\mathbb N\), and
suppose that \(H(A)^2\) is Lindel\"of.  Let \(T\subseteq\R\) be uncountable
and let
\[
F=(f_1,\ldots,f_m):T\longrightarrow\R^m
\]
be one-to-one.  Suppose that, for each \(i\in\{1,\ldots,m\}\), the function
\(f_i\) is constant, strictly increasing, or strictly decreasing on \(T\).
If \(F[T]\) is closed and discrete in
\(H(A)^m\), then at least two coordinate functions are strictly increasing
and at least two are strictly decreasing.  In particular, \(m\ge4\).
\end{theorem}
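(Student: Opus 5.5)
We would argue by contradiction, following the perfect-kernel strategy of the proof of Theorem~\ref{thm:zfc-square-cube}. First, a symmetry reduction. Replacing the parameter set \(T\) by \(-T\) and \(f_i\) by \(t\mapsto f_i(-t)\) swaps the strictly increasing and strictly decreasing coordinates. It changes neither \(F[T]\) nor the Hattori structure. So it suffices to prove that at least two coordinates are strictly increasing, assuming \(F[T]\) is an uncountable closed discrete subset \(D\) of \(H(A)^m\). Suppose, for a contradiction, that at most one coordinate is strictly increasing, say \(f_1\) if it exists.

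As in the proof of Theorem~\ref{thm:zfc-square-cube}, we would thin \(D\) to a bounded uncountable subset, still indexed by an uncountable \(T'\subseteq T\). We then pass to the set \(C\) of its Euclidean condensation points, which is a nonempty compact perfect set, and reproduce the one-sided shadow argument of that proof verbatim. For every \(p\in C\) there is a Euclidean neighborhood \(W_p\) such that no \(q\in C\cap W_p\setminus\{p\}\) satisfies \(q_i>p_i\) for all \(i\in I_B(p)\), where \(I_B(p)=\{i:p_i\in B\}\). Nothing in that step used \(m=3\). Next we record the order structure of \(C\). Since \(C\) consists of Euclidean limits of points \(F(t)\), the coordinates of \(C\) inherit weak monotonicity along the order of \(T'\). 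Constant coordinates are constant on \(C\). Any two points \(p,q\in C\) are comparable in the sense that either \(q_i\ge p_i\) for every increasing \(i\) and \(q_i\le p_i\) for every decreasing \(i\), or the reverse holds.

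The key step uses points of \(C\) lying ``before'' \(p\): such points have smaller (or equal) increasing coordinates and larger (or equal) decreasing coordinates. If \(p\in C\) is not a left-gap point of the chain, such points \(q\ne p\) accumulate at \(p\). The shadow condition then forbids \(q_i>p_i\) for every active \(i\). This forces some increasing coordinate to be active at \(p\), provided we can ensure strict inequalities in the decreasing coordinates. To get strictness, we would first apply Lemma~\ref{lem:simultaneous-perfect-injective} to the nonconstant coordinates. Here the fibers have empty interior because \(F\) is injective and every monotone coordinate is injective on \(T'\). This yields a compact perfect \(Q\subseteq C\) on which all nonconstant coordinates are one-to-one, and we replace \(C\) by \(Q\) in the shadow argument. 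If no coordinate is increasing, we already obtain a contradiction at any non-gap point. Otherwise \(p_1\in B\) for all but countably many \(p\in Q\), namely all except the gap points, of which there are countably many as with the sets \(E_i\) in the proof of Theorem~\ref{thm:zfc-square-cube}.

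To conclude, \(\pi_1|_Q\) is a continuous injection on a compact set, hence a homeomorphism onto a compact perfect set \(\pi_1[Q]\). All but countably many points of \(\pi_1[Q]\) lie in \(B\). Removing a countable set from a compact perfect set leaves an uncountable Borel set, which by \cite[Theorem~29.1]{Kechris1995} contains a nonempty perfect subset, hence a copy of \(2^\omega\) inside \(B\). This contradicts Theorem~\ref{thm:square}. We expect the main obstacle to be the passage from \(D\) to \(Q\): justifying carefully that the order and monotonicity structure survive in the condensation set, and that ``preceding'' points of \(Q\) approach \(p\) with strict inequalities in all active decreasing coordinates. If the chain description of \(C\) proves awkward, our fallback is to work directly in \(T'\). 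There we would use that monotone functions on \(T'\) have only countably many discontinuities, and that all but countably many \(t\in T'\) are two-sided condensation points of \(T'\). We would run the same shadow argument at \(F(t)\) and then extract a Cantor set in \(B\) from the Euclidean closure of \(f_1[T']\).
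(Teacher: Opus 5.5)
Your main route works, but one step fails as written. The theorem allows constant coordinates. Suppose some \(f_{i_0}\equiv c\) with \(c\in B\). Then \(i_0\in I_B(p)\) for every \(p\in C\), while \(q_{i_0}=p_{i_0}\) for every \(q\in C\). So the verbatim shadow statement (no nearby \(q\ne p\) with \(q_i>p_i\) for \emph{all} \(i\in I_B(p)\)) is vacuous, and your key step forces nothing.

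The repair is short, in either of two ways.
\begin{itemize}
\item Delete constant coordinates at the outset. \(F[T]\) lies in the closed slice \(\{x:x_{i_0}=c\}\), which is homeomorphic to \(H(A)^{m-1}\); \(F[T]\) is closed discrete there, and the shortened map is still injective.
\item Sharpen the shadow statement. Since \(d_{i_0}=p_{i_0}\) for all \(d\in D^*\), strict inequality is needed only in the nonconstant active coordinates.
\end{itemize}

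After this fix, the points you list as the main obstacle are routine. Write \(x\preceq y\) when \(x_i\le y_i\) on increasing coordinates, \(x_i\ge y_i\) on decreasing ones, and \(x_i=y_i\) on constant ones. The relation ``\(x\preceq y\) or \(y\preceq x\)'' is closed in \(\R^m\times\R^m\) and contains \(D^*\times D^*\), so \(C\) is a chain. On \(Q\), every nonconstant coordinate is injective and weakly monotone along the chain, hence strictly monotone, and it is a homeomorphism of \(Q\) onto its image. So at every point of \(Q\) other than the countably many left-gap points, strict predecessors in \(Q\) converge to the point and differ from it strictly in every nonconstant coordinate.

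The paper argues differently, essentially along your fallback, and never leaves the parameter line. It extends each \(f_i\) to a monotone \(\bar f_i\) on an interval containing \(K=\overline T\). It removes the countably many discontinuities of the \(\bar f_i\) and the points of \(K\) not approached by \(T\) from both sides, and takes a perfect \(Q\subseteq K\). Now take \(q\in Q\) and \(t_n\in T\) with \(t_n\uparrow q\). Decreasing coordinates converge to \(\bar f_i(q)\) from above, hence in \(H(A)\). Constant coordinates converge trivially. An increasing coordinate converges as soon as \(\bar f_j(q)\in A\). So \(F(t_n)\to\bar F(q)\) in \(H(A)^m\), contradicting closed discreteness. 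This rules out having no increasing coordinate, and shows that a unique one satisfies \(\bar f_j[Q]\subseteq B\).

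That proof is more elementary: it needs no condensation set, no Mycielski lemma and no shadow lemma. It uses one-sided convergence of sequences instead of isolating boxes, and it is insensitive to constant coordinates. Your route buys a unified picture: the monotone-trace theorem becomes another instance of the one-sided shadow mechanism of Theorem~\ref{thm:zfc-square-cube}, with monotonicity entering only through the chain structure of the Euclidean closure. The cost is heavier machinery and the extra bookkeeping for constant coordinates.
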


\begin{proof}
Since the graph of \(F\) in \(\R^{m+1}\) is covered by countably many
bounded boxes, one of them meets the graph in an uncountable set.  Restrict
\(T\) accordingly.  Then \(T\) and all the sets \(f_i[T]\) are bounded.  The
restricted image \(F[T]\) is still closed and discrete because it is a subset
of the original closed discrete image.  Put
\(K=\overline T^{\R}\).  This set is compact because \(T\) is bounded.
Choose a compact interval \(J\supseteq K\).  For each strictly increasing
coordinate, define a
nondecreasing extension on \(J\) by
\[
\bar f_i(x)=\sup\bigl(\{f_i(t):t\in T,\ t\le x\}\cup\{\inf f_i[T]\}\bigr)
\qquad(x\in J).
\]
This function agrees with \(f_i\) on \(T\).  For a strictly decreasing
coordinate, apply the same construction to \(-f_i\) and then negate the
resulting extension.  Extend each constant coordinate as a constant function.
Denote the resulting monotone extensions by \(\bar f_i\).

Each monotone real-valued function has only countably many discontinuities.
Only countably many points of \(K\) fail to be approached by points of
\(T\) from both sides.  Apart from the possible endpoints of \(K\), any
point not approached from one side is an endpoint of a component of
\(\R\setminus K\).  Removing from \(K\) the discontinuity points of the
\(\bar f_i\), together with the points that fail this two-sided approach
property, leaves an uncountable Borel subset of \(\R\).  The perfect set theorem
\cite[Theorem~29.1]{Kechris1995} therefore gives a nonempty perfect set
\(Q\subseteq K\) such that every \(\bar f_i\) is continuous at every point of
\(Q\), and every point of \(Q\) is approached by points of \(T\) from both
sides.  Since \(Q\) is closed in the compact set \(K\), it is compact.

If \(f_i\) is strictly increasing on \(T\), then \(\bar f_i|_Q\) is strictly
increasing.  For \(q<r\) in \(Q\), choose \(s,t\in T\) with
\(q<s<t<r\).  Since \(\bar f_i\) is nondecreasing and agrees with \(f_i\)
on \(T\),
\[
\bar f_i(q)\le f_i(s)<f_i(t)\le\bar f_i(r).
\]
The decreasing case is analogous.  Thus
\[
\bar F=(\bar f_1,\ldots,\bar f_m)|_Q
\]
is continuous.  Since \(F\) is one-to-one, at least one \(f_i\) is
strictly monotone on \(T\).  By the preceding argument,
\(\bar f_i|_Q\) is strictly monotone.  Hence \(\bar F\) is one-to-one.

Suppose first that there is no strictly increasing coordinate.  Fix
\(q\in Q\) and choose a strictly increasing sequence \(t_n\in T\) with
\(t_n<q\) and \(t_n\to q\).  For each \(n\), choose \(s_n\in T\) with
\(t_n<s_n<q\).  Then, for every strictly decreasing coordinate \(i\),
\[
f_i(t_n)>f_i(s_n)\ge\bar f_i(q)
\quad\text{and}\quad
f_i(t_n)\longrightarrow\bar f_i(q).
\]
For each strictly decreasing coordinate \(i\), this convergence is from
above.  If \(\bar f_i(q)\in B\), the terms are eventually in every basic
interval \([\bar f_i(q),\bar f_i(q)+\varepsilon)\), and if
\(\bar f_i(q)\in A\), Euclidean convergence suffices.  Thus
\(f_i(t_n)\to\bar f_i(q)\) in \(H(A)\).  Constant coordinates also
converge.  Therefore
\[
F(t_n)\longrightarrow\bar F(q)
\]
in \(H(A)^m\).  Since \(F\) is one-to-one, the points \(F(t_n)\) are
pairwise distinct.  If \(\bar F(q)\notin F[T]\), this convergence contradicts
closedness of \(F[T]\).  If \(\bar F(q)\in F[T]\), it contradicts
discreteness.  Thus there is at least one strictly increasing coordinate.
The symmetric argument from the right, using a strictly decreasing sequence
\(u_n>q\) with \(u_n\in T\) and \(u_n\to q\), gives at least one strictly
decreasing coordinate.

Suppose that \(f_j\) is the unique strictly increasing coordinate.  If
\(\bar f_j(q)\in A\) for some \(q\in Q\), choose a strictly increasing
sequence \(t_n\in T\) with \(t_n<q\) and \(t_n\to q\).  Then
\(F(t_n)\to\bar F(q)\) in \(H(A)^m\), again a contradiction.  Hence
\[
\bar f_j[Q]\subseteq B.
\]
The continuous injection \(\bar f_j|_Q\) is a homeomorphism onto its
compact image.  Since \(Q\) has no isolated points, that image is a nonempty
perfect subset of \(B\).  By \cite[Theorem~6.2]{Kechris1995}, this contradicts
Theorem~\ref{thm:square}.  Thus there are at least two
strictly increasing coordinates.  The argument for strictly decreasing
coordinates is symmetric.
\end{proof}

The four-coordinate threshold in Theorem~\ref{thm:monotone-trace-barrier}
is attained by the affine line constructed in the proof of
Theorem~\ref{thm:zfc-separation}.  Two of its coordinate functions are
strictly increasing and two are strictly decreasing.

\section{Reductions to Sorgenfrey powers}
\label{sec:reductions}

For \(A\subseteq\R\) and \(B=\R\setminus A\), the subspace topology on
\(A\) inherited from \(H(A)\) is Euclidean, while the subspace topology on
\(B\) is the Sorgenfrey subspace topology
\cite[Lemma~2.1]{ChatyrkoHattori2013}.  Gruenhage and Nyikos proved that
for a subspace of a finite product whose factors are Sorgenfrey or Euclidean
lines,
\(\omega_1\)-compactness and hereditary Lindel\"ofness are equivalent
\cite[Theorem~6.9]{GruenhageNyikos1993}.  Hern\'andez-Hern\'andez,
Ram\'irez-Ch\'avez, and Rojas-Hern\'andez proved that the product of a
hereditarily Lindel\"of space with a cosmic space is hereditarily Lindel\"of
\cite[Proposition~6.6]{HHRR2025}.

\begin{corollary}
\label{cor:euclidean-sorgenfrey-transfer}
Let \(r,m\in\mathbb N\), let \(C\subseteq\R^r\) carry its Euclidean
subspace
topology, and let \(X\subseteq\R\) carry the Sorgenfrey subspace topology.
If \(X^m\) is Lindel\"of, then \(C\times X^m\) is hereditarily
Lindel\"of.  If \(C\ne\emptyset\) and \(C\times X^m\) is Lindel\"of,
then \(X^m\) is Lindel\"of.
\end{corollary}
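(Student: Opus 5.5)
The first assertion follows by chaining the two cited results. Since \(X\subseteq\R\) carries the Sorgenfrey subspace topology, \(X^m\) is a subspace of a finite product of Sorgenfrey lines. Gruenhage and Nyikos \cite[Theorem~6.9]{GruenhageNyikos1993} show that for such subspaces \(\omega_1\)-compactness and hereditary Lindel\"ofness coincide. If \(X^m\) is Lindel\"of, it is \(\omega_1\)-compact, because an uncountable closed discrete subset would contradict Lindel\"ofness. Hence \(X^m\) is hereditarily Lindel\"of.

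The set \(C\subseteq\R^r\) with its Euclidean topology is separable metrizable, and therefore cosmic, since it has a countable network given by a countable base. Proposition~6.6 of \cite{HHRR2025} then shows that \(C\times X^m\) is hereditarily Lindel\"of. The order of factors is irrelevant up to homeomorphism, and the case \(C=\emptyset\) is trivial.

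An alternative route avoids \cite{HHRR2025}. One can view \(C\times X^m\) directly as a subspace of \(\R^r\times S^m\), where \(S\) denotes the Sorgenfrey line, and verify \(\omega_1\)-compactness. This would need a discrete-surface argument and looks harder, so I will use the cosmic-product route.

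For the second assertion, fix \(c\in C\). The slice \(\{c\}\times X^m\) is closed in \(C\times X^m\), because \(\{c\}\) is closed in the \(T_1\) space \(C\) and preimages of closed sets under the projection are closed. It is also homeomorphic to \(X^m\). A closed subspace of a Lindel\"of space is Lindel\"of, so \(X^m\) is Lindel\"of. Alternatively, the projection onto \(X^m\) is a continuous surjection when \(C\neq\emptyset\), and continuous images of Lindel\"of spaces are Lindel\"of.

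The main point needing care is the exact form of the Gruenhage–Nyikos theorem. I must check that it applies to arbitrary subspaces of \(S^m\), which is how the paper describes it. I must also check that the Lindel\"ofness hypothesis really gives \(\omega_1\)-compactness in their sense, that is, that every uncountable subset has a complete accumulation point, or equivalently in these spaces that there is no uncountable closed discrete set. For Lindel\"of spaces this is standard. Everything else consists of direct applications of the cited results.
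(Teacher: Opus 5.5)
Your proof is correct and follows the paper's argument: Theorem~6.9 of Gruenhage and Nyikos upgrades the Lindel\"of space \(X^m\) to hereditarily Lindel\"of, \(C\) is second countable and hence cosmic, Proposition~6.6 of Hern\'andez-Hern\'andez, Ram\'irez-Ch\'avez, and Rojas-Hern\'andez handles the product, and the converse follows by projection (your closed-slice variant works equally well). One small correction to your closing remark: \(\omega_1\)-compactness here means only that there is no uncountable closed discrete subset, which is exactly what your argument uses, so do not gloss it as ``every uncountable set has a complete accumulation point,'' since Lindel\"of spaces need not have that property for sets of singular cardinality of countable cofinality.
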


\begin{proof}
Assume that \(X^m\) is Lindel\"of.  As a subspace of a finite power of the
Sorgenfrey line, it is \(\omega_1\)-compact and hence hereditarily Lindel\"of by
\cite[Theorem~6.9]{GruenhageNyikos1993}.  The Euclidean subspace \(C\) is
second countable and therefore cosmic.  The product theorem
\cite[Proposition~6.6]{HHRR2025} now shows that \(C\times X^m\) is
hereditarily Lindel\"of.  If \(C\ne\emptyset\) and \(C\times X^m\) is
Lindel\"of, then the continuous projection \(C\times X^m\to X^m\) shows
that \(X^m\) is Lindel\"of.
\end{proof}

\begin{corollary}
\label{cor:mixed-power-equivalence}
Let \(\emptyset\ne A\subseteq\R\), put \(B=\R\setminus A\), and let
\(k,m\in\mathbb N\).  Here \(A\) carries the Euclidean subspace topology,
and \(B\) carries the Sorgenfrey subspace topology.  The following conditions are equivalent.
\begin{enumerate}
\item \(B^m\) is Lindel\"of.
\item \(B^m\) is hereditarily Lindel\"of.
\item \(A^k\times B^m\) is Lindel\"of.
\item \(A^k\times B^m\) is hereditarily Lindel\"of.
\end{enumerate}
\end{corollary}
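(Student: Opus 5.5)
The plan is to close the cycle of implications (1)\(\Rightarrow\)(2)\(\Rightarrow\)(4)\(\Rightarrow\)(3)\(\Rightarrow\)(1), using Corollary~\ref{cor:euclidean-sorgenfrey-transfer} with \(X=B\).

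\textbf{Step 1: (1)\(\Rightarrow\)(2).} \(B^m\) is a subspace of the \(m\)-th power of the Sorgenfrey line. If it is Lindel\"of, it is \(\omega_1\)-compact. The Gruenhage--Nyikos theorem \cite[Theorem~6.9]{GruenhageNyikos1993} then makes it hereditarily Lindel\"of. This is exactly the first step inside the proof of Corollary~\ref{cor:euclidean-sorgenfrey-transfer}.

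\textbf{Step 2: (2)\(\Rightarrow\)(4).} Put \(C=A^k\subseteq\R^k\) with the Euclidean subspace topology. This is legitimate because the subspace topology of \(A\) inherited from \(H(A)\) is Euclidean \cite[Lemma~2.1]{ChatyrkoHattori2013}, so \(A^k\) is literally a Euclidean subspace of \(\R^k\). Since (2) implies (1), the first assertion of Corollary~\ref{cor:euclidean-sorgenfrey-transfer} gives that \(C\times B^m=A^k\times B^m\) is hereditarily Lindel\"of.

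\textbf{Step 3: (4)\(\Rightarrow\)(3).} This is trivial.

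\textbf{Step 4: (3)\(\Rightarrow\)(1).} The hypothesis \(A\neq\emptyset\) gives \(C=A^k\ne\emptyset\). The second assertion of Corollary~\ref{cor:euclidean-sorgenfrey-transfer}, which amounts to the continuous projection onto \(B^m\), shows that \(B^m\) is Lindel\"of.

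\textbf{Edge case.} If \(B=\emptyset\), all four spaces are empty or trivially hereditarily Lindel\"of, so nothing changes.

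\textbf{Main obstacle.} There is no genuine difficulty. The only point needing care is identifying the topologies correctly: \(A^k\) and \(B^m\) must be read as subspaces with the Euclidean and Sorgenfrey topologies respectively, so that the hypotheses of Corollary~\ref{cor:euclidean-sorgenfrey-transfer} (\(C\subseteq\R^r\) Euclidean, \(X\subseteq\R\) Sorgenfrey) are met. The nonemptiness of \(A\) is used only in Step 4.
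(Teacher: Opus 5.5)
Your proposal is correct and follows the paper's proof. Both arguments get hereditary Lindel\"ofness of \(B^m\) from Gruenhage--Nyikos, get (4) from Corollary~\ref{cor:euclidean-sorgenfrey-transfer} with \(C=A^k\) and \(X=B\), and recover (1) from (3) by projection using \(A\ne\emptyset\); you only arrange these steps as a single cycle rather than as implications out of (1).
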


\begin{proof}
If \(B^m\) is Lindel\"of, then \(B^m\), viewed as a subspace of a finite
power of the Sorgenfrey line, is hereditarily Lindel\"of by
\cite[Theorem~6.9]{GruenhageNyikos1993}.  Applying
Corollary~\ref{cor:euclidean-sorgenfrey-transfer} with \(C=A^k\) and \(X=B\)
shows that \(A^k\times B^m\) is hereditarily Lindel\"of.  This proves
(1)\(\Rightarrow\)(2) and (1)\(\Rightarrow\)(4).  The implications
(4)\(\Rightarrow\)(3) and (2)\(\Rightarrow\)(1) follow from the definitions.
Since \(A\ne\emptyset\), projection onto the last \(m\) coordinates gives
(3)\(\Rightarrow\)(1).
\end{proof}

\begin{corollary}
\label{cor:finite-hereditary}
Let \(A\subseteq\R\), put \(B=\R\setminus A\), and let \(n\in\mathbb N\).
Then \(H(A)^n\) is hereditarily Lindel\"of if and only if \(B^n\), with its
Sorgenfrey product topology, is Lindel\"of.
\end{corollary}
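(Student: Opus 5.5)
Two directions. For ``only if'', observe that \(B^n\) embeds in \(H(A)^n\): the subspace topology on \(B\) from \(H(A)\) is Sorgenfrey \cite[Lemma~2.1]{ChatyrkoHattori2013}, so the product subspace \(B^n\subseteq H(A)^n\) carries the Sorgenfrey product topology. Hence hereditary Lindel\"ofness of \(H(A)^n\) gives Lindel\"ofness of \(B^n\). When \(B=\emptyset\), \(B^n\) is empty and trivially Lindel\"of, while \(H(\R)^n=\R^n\) is second countable. So the statement is only interesting for \(B\ne\emptyset\).

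For ``if'', assume \(B^n\) is Lindel\"of. I will decompose \(H(A)^n\) into finitely many pieces indexed by the type pattern. For each \(S\subseteq\{1,\ldots,n\}\), put
\[
P_S=\{x\in\R^n:x_i\in B\ (i\in S),\ x_i\in A\ (i\notin S)\}.
\]
As a subspace of \(H(A)^n\), \(P_S\) is homeomorphic to \(A^{n-|S|}\times B^{|S|}\), with \(A\) Euclidean and \(B\) Sorgenfrey. Since \(H(A)^n=\bigcup_S P_S\) is a finite union, it suffices to show each \(P_S\) is hereditarily Lindel\"of. A finite union of hereditarily Lindel\"of subspaces is hereditarily Lindel\"of: a subspace \(Y\) is covered by the \(Y\cap P_S\), and an open cover of \(Y\) restricts to each of these, each having a countable subcover.

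For the pieces, first note that \(B^{|S|}\) is Lindel\"of when \(|S|\le n\). It is a closed subspace of \(B^n\): fix \(b\in B\) and take the slice with the remaining coordinates equal to \(b\), which is closed since points are closed in the \(T_1\) space \(B\). Then \(B^{|S|}\) is hereditarily Lindel\"of by \cite[Theorem~6.9]{GruenhageNyikos1993}, as in the proof of Corollary~\ref{cor:mixed-power-equivalence}.
\begin{enumerate}
\item If \(S=\emptyset\), then \(P_S=A^n\) is Euclidean, hence second countable.
\item If \(S\ne\emptyset\) and \(S\ne\{1,\ldots,n\}\), apply Corollary~\ref{cor:euclidean-sorgenfrey-transfer} with \(C=A^{n-|S|}\subseteq\R^{n-|S|}\) and \(X=B\), \(m=|S|\). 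It gives that \(C\times B^{|S|}\) is hereditarily Lindel\"of, and this holds even when \(C\) is empty.
\item If \(S=\{1,\ldots,n\}\), then \(P_S=B^n\), which is hereditarily Lindel\"of as above.
\end{enumerate}

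The main point needing care is the homeomorphism between \(P_S\) and the mixed product. It holds because the subspace topology of a product on a product of subsets is the product of the subspace topologies. The proof then closes by the finite-union step, so I expect no real obstacle beyond this bookkeeping.
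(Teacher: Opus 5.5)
Your proposal is correct and follows the paper's proof: you use the same decomposition of \(H(A)^n\) into the \(2^n\) type strata, with \(A^n\) second countable, \(B^n\) hereditarily Lindel\"of by Gruenhage--Nyikos, the mixed strata handled by the Euclidean--Sorgenfrey transfer, and the finite-union step. The differences are only cosmetic: you obtain \(B^{|S|}\) Lindel\"of from a closed slice rather than a projection, and by citing Corollary~\ref{cor:euclidean-sorgenfrey-transfer} directly instead of Corollary~\ref{cor:mixed-power-equivalence} you avoid the paper's separate treatment of \(A=\emptyset\).
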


\begin{proof}
If \(H(A)^n\) is hereditarily Lindel\"of, then its subspace \(B^n\) is
Lindel\"of.  Conversely, suppose that \(B^n\) is Lindel\"of.  If
\(A=\emptyset\), then \(H(A)^n=B^n\) is \(\omega_1\)-compact and hence
hereditarily Lindel\"of by \cite[Theorem~6.9]{GruenhageNyikos1993}.  Suppose
that \(A\ne\emptyset\).  For each
\(r\in\{1,\ldots,n\}\), the projection of \(B^n\) onto its first \(r\)
coordinates shows that \(B^r\) is Lindel\"of.  The stratum \(A^n\) is second
countable.  The Lindel\"of stratum \(B^n\) is \(\omega_1\)-compact and hence
hereditarily Lindel\"of by \cite[Theorem~6.9]{GruenhageNyikos1993}.  Every
mixed stratum is hereditarily Lindel\"of by
Corollary~\ref{cor:mixed-power-equivalence}.
These finitely many strata cover \(H(A)^n\).  Intersecting them with an
arbitrary subspace of \(H(A)^n\) expresses that subspace as a finite union of
Lindel\"of spaces.  Hence \(H(A)^n\) is hereditarily Lindel\"of.
\end{proof}

\begin{theorem}
\label{thm:finite-power-reduction}
Let \(A\subseteq\R\), put \(B=\R\setminus A\), and let \(n\in\mathbb N\)
with \(n\ge3\).  If \(B^{n-1}\), with its Sorgenfrey product topology, is
Lindel\"of, then \(H(A)^n\) is Lindel\"of.  Consequently,
\[
H(A)^n\text{ is not Lindel\"of}
\quad\Longrightarrow\quad
B^{n-1}\text{ is not Lindel\"of}.
\]
\end{theorem}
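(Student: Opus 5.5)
The plan is to argue by induction on \(n\ge3\), reducing everything to the single top stratum \(B^n\) and then running a perfect-kernel argument modeled on the proof of Theorem~\ref{thm:zfc-square-cube}.

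\textbf{Preliminary reductions.} Since \(n-1\ge2\), projecting \(B^{n-1}\) onto two coordinates shows that \(B^2\) is Lindel\"of. Hence \(B\) contains no copy of \(2^\omega\). Indeed, a Cantor set \(K\subseteq B\) carries an order-reversing homeomorphism \(h\) of \(K\) onto itself. Its graph \(\{(x,h(x)):x\in K\}\) is Euclidean-closed, uncountable, and a discrete surface, so Proposition~\ref{prop:discrete-surface} would contradict Lindel\"ofness of \(B^2\). Theorem~\ref{thm:square} therefore makes \(H(A)^2\) Lindel\"of, and Theorem~\ref{thm:zfc-square-cube} makes \(H(A)^3\) Lindel\"of. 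This settles the base case \(n=3\). For \(n\ge4\), the set \(B^{n-2}\) is Lindel\"of as a projection of \(B^{n-1}\), so \(H(A)^{n-1}\) is Lindel\"of by the induction hypothesis.

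\textbf{Stratification.} Suppose \(H(A)^n\) is not Lindel\"of. Lemmas~\ref{lem:hereditary-subpara} and~\ref{lem:subpara-extent} give an uncountable closed discrete \(D\subseteq H(A)^n\). Decompose \(H(A)^n\) into the \(2^n\) strata indexed by the set \(S\) of coordinates lying in \(B\). Each stratum with \(|S|\le n-1\) is homeomorphic to \(A^{n-|S|}\times B^{|S|}\). It is hereditarily Lindel\"of by Corollary~\ref{cor:mixed-power-equivalence}, because \(B^{|S|}\) is a projection of \(B^{n-1}\). Each such stratum therefore meets \(D\) countably, and we may assume \(D\subseteq B^n\). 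Every coordinate fiber of \(D\) is closed discrete in a slice homeomorphic to \(H(A)^{n-1}\), which is Lindel\"of, so these fibers are countable. Thinning exactly as in Theorem~\ref{thm:zfc-square-cube}, we make \(D\) bounded with all coordinate projections injective. We then pass to the compact perfect condensation set \(C\), and Lemma~\ref{lem:simultaneous-perfect-injective} gives a compact perfect \(Q\subseteq C\) on which all projections are injective. As in that proof, each \(p\in Q\) with active set \(I_B(p)\ne\emptyset\) has the local shadow property~\eqref{eq:strict-shadow-Q}: no nearby \(q\in Q\setminus\{p\}\) exceeds \(p\) in every active coordinate. Moreover, each \(A_i=\{x\in Q:x_i\in A\}\) meets every nonempty perfect subset of \(Q\), since \(B\) has no Cantor set.

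\textbf{Main step and expected obstacle.} The hypothesis on \(B^{n-1}\), rather than on \(H(A)^{n-1}\), must be used in the following way. Consider \(x\in A_n\) outside the countable right-gap sets. Its active set lies in \(\{1,\dots,n-1\}\), so the shadow property makes \(x\) a local Pareto maximum of \(Q\) in the first \(n-1\) coordinates. Arguing as for~\eqref{eq:AiMi} and then via the perfect set theorem, the Borel set \(M_n\) of such local maxima is cocountable in \(Q\). Stratifying \(M_n\) by the radius \(1/k\) of the maximality condition, some piece \(S_k\) is uncountable. Any two points of \(S_k\) at distance less than \(1/k\) are incomparable in the first \(n-1\) coordinates. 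Covering by finitely many small balls, \(S_k\) then contains an uncountable set whose projection to \(\R^{n-1}\) is a discrete surface in the sense of Proposition~\ref{prop:discrete-surface}. The plan is to transfer this to \(D\): replace each such point of \(Q\) by a nearby point of \(D\subseteq B^n\), using that points of \(Q\) are condensation points of \(D\), and keep the incomparability strict. The transferred set is an uncountable discrete surface inside \(B^{n-1}\), contradicting Lindel\"ofness of \(B^{n-1}\).

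I expect this transfer from \(Q\) back into \(B^{n-1}\) to be the hard part. Strict incomparability must survive the perturbation, and the approximating points must be chosen with \(\pi_{\le n-1}\) still injective. If the perturbation cannot be controlled, the fallback is to run the pigeonhole intersection argument of Theorem~\ref{thm:zfc-square-cube} on the \(n\) sets \(M_i\). Then, wherever the sign pattern of the \(n\) coordinate differences does not force a contradiction, we would invoke Lindel\"ofness of \(B^{n-1}\).
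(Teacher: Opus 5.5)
Your preliminary reductions and the stratification step are sound, but the main step has a genuine gap, and it is exactly the transfer you flag. The points of \(Q\) are Euclidean condensation points of \(D\), not points of \(D\), so nothing places them in \(B^n\). For \(x\in S_k\) you only know \(x_n\in A\), and some of the first \(n-1\) coordinates may also lie in \(A\). The uncountable set you extract from \(S_k\) is therefore a discrete surface in the Sorgenfrey space \(\R^{n-1}\), which contradicts nothing, since the Sorgenfrey plane is already non-Lindel\"of.

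Perturbing to nearby points of \(D\) cannot repair this. For \(x,y\in S_k\), the margin of strict incomparability is at most \(\|x-y\|\). Suppose each \(x\) is moved by some \(\rho_x>0\). Pigeonholing gives uncountably many \(x\) with \(\rho_x\ge1/m\), and among them there are pairs at distance less than \(2/m\). For such pairs, strict incomparability of the perturbed points is no longer guaranteed.

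The fallback fails as well. For \(n\ge4\), the \(n\) nonzero coordinate differences can have sign pattern \((+,+,-,-)\), so neither point dominates the other in \(n-1\) coordinates. In any case the points of \(Q\) still do not lie in \(B^{n-1}\), so the hypothesis on \(B^{n-1}\) has nothing to act on. The \(Q\)-machinery discards precisely the information that matters, namely membership in \(B\).

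The missing idea is to use Lindel\"ofness of the lower strata a second time, not only to discard countably many points of \(D\).

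\begin{itemize}
\item After you arrange \(D\subseteq B^n\), the set \(D\) is still closed discrete. Every \(p\in\R^n\setminus B^n\) lies outside \(D\), so it has a basic box \(V_p\) with \(V_p\cap D=\emptyset\).
\item \(\R^n\setminus B^n\) is a finite union of strata you have shown to be Lindel\"of, so countably many boxes \(V_{p_m}\) cover it.
\item Each basic box is a Euclidean Borel set. Hence \(E=\R^n\setminus\bigcup_mV_{p_m}\) is an uncountable Borel set with \(D\subseteq E\subseteq B^n\).
\item By the perfect set theorem, \(E\) contains a compact \(K\cong2^\omega\). Since \(K\subseteq\prod_i\pi_i[K]\), some \(\pi_i[K]\) is an uncountable compact subset of \(B\). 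It therefore contains a Cantor set, contradicting what you proved about \(B\).
\end{itemize}

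This is the paper's argument, phrased there with an open cover by basic boxes and its Borel remainder rather than a closed discrete set. It uses the hypothesis on \(B^{n-1}\) only for the Lindel\"ofness of the mixed strata and the absence of Cantor sets in \(B\). It makes the induction, the fiber thinning, Lemma~\ref{lem:simultaneous-perfect-injective}, and the Pareto analysis unnecessary. It also treats \(n=3\) uniformly, without Theorem~\ref{thm:zfc-square-cube}.
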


For \(n=3\), the conclusion also follows from
Corollaries~\ref{cor:finite-hereditary} and~\ref{cor:cube-characterization}.
The proof below treats all \(n\ge3\) uniformly.

\begin{proof}
The hypothesis implies that \(B^r\) is Lindel\"of for every
\(r\in\{1,\ldots,n-1\}\) by coordinate projection.  In particular, \(B^2\) is
Lindel\"of, so Corollary~\ref{cor:finite-hereditary} and
Theorem~\ref{thm:square} show that \(B\) contains no subspace homeomorphic to
\(2^\omega\).

If \(A=\emptyset\), then \(B=\R\) and the hypothesis would make the
Sorgenfrey plane Lindel\"of by projection, which is impossible.  Hence
\(A\ne\emptyset\).  Every stratum of \(H(A)^n\) with at least one coordinate
in \(A\) is, up to a coordinate permutation, either \(A^n\) or
\(A^{n-r}\times B^r\) for some \(r\in\{1,\ldots,n-1\}\).  The first is second
countable, and all the others are Lindel\"of by
Corollary~\ref{cor:mixed-power-equivalence}.  There are only finitely many
strata, so
\[
H(A)^n\setminus B^n
\]
is Lindel\"of.

Suppose that \(H(A)^n\) is not Lindel\"of.  Choose an open cover with no
countable subcover and refine it by basic open boxes.  Let \(\mathcal U\)
denote the resulting cover.  It also has no countable subcover.  A countable
subfamily \(\mathcal U_0\) covers
\(H(A)^n\setminus B^n\).  The remainder
\[
C=H(A)^n\setminus\bigcup\mathcal U_0
\]
is contained in \(B^n\) and is uncountable.  Otherwise, choosing one member
of \(\mathcal U\) for each point of the countable remainder would give a
countable subcover of \(H(A)^n\).  Every member of
\(\mathcal U_0\) is a finite product of Euclidean open intervals and
half-open intervals, so \(C\) is Borel in the Euclidean space \(\R^n\).
The perfect set theorem \cite[Theorem~29.1]{Kechris1995}, followed by
\cite[Theorem~6.2]{Kechris1995}, gives a compact subspace \(K\subseteq C\)
homeomorphic to \(2^\omega\).  At least one coordinate projection of \(K\)
has uncountable image.  This image is an uncountable compact subset of \(B\).  By
\cite[Theorem~29.1]{Kechris1995} and \cite[Theorem~6.2]{Kechris1995}, it
contains a subspace homeomorphic to \(2^\omega\), contradicting the fact that
\(B\) contains no subspace homeomorphic to \(2^\omega\).
\end{proof}

\begin{theorem}
\label{thm:countable-power-reduction}
Let \(A\subseteq\R\) and put \(B=\R\setminus A\).  The following conditions
are equivalent.
\begin{enumerate}
\item \(H(A)^\omega\) is hereditarily Lindel\"of.
\item \(H(A)^m\) is hereditarily Lindel\"of for every \(m\in\mathbb N\).
\item \(B^m\), with its Sorgenfrey product topology, is Lindel\"of for every
\(m\in\mathbb N\).
\end{enumerate}
\end{theorem}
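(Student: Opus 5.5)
We prove the cycle (1)\(\Rightarrow\)(2)\(\Rightarrow\)(3)\(\Rightarrow\)(1). The first two implications are formal. For (1)\(\Rightarrow\)(2), each finite power \(H(A)^m\) is homeomorphic to a subspace of \(H(A)^\omega\): fix a point in the remaining coordinates and use the slice. Hereditary Lindel\"ofness passes to subspaces. For (2)\(\Rightarrow\)(3), Corollary~\ref{cor:finite-hereditary} says that \(H(A)^m\) is hereditarily Lindel\"of exactly when \(B^m\) is Lindel\"of, so (3) follows for each \(m\).

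The substantive direction is (3)\(\Rightarrow\)(1). Assume every \(B^m\) is Lindel\"of. By Corollary~\ref{cor:finite-hereditary}, every \(H(A)^m\) is then hereditarily Lindel\"of. To pass to the countable power, I use the standard fact that hereditary Lindel\"ofness is equivalent to every open subspace being Lindel\"of, together with a reduction to finite supports.

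Let \(Y\subseteq H(A)^\omega\) and let \(\mathcal U\) be a family of open sets of \(H(A)^\omega\) covering \(Y\). Refine \(\mathcal U\) to basic open boxes. Each such box is \(\pi_m^{-1}(V)\) for some \(m\) and some open box \(V\subseteq H(A)^m\), where \(\pi_m\) denotes projection onto the first \(m\) coordinates. Group the boxes according to the least such \(m\), giving subfamilies \(\mathcal U_m\). Put
\[
W_m=\bigcup\{V:\pi_m^{-1}(V)\in\mathcal U_m\}.
\]
This is an open subset of \(H(A)^m\), and it is Lindel\"of because \(H(A)^m\) is hereditarily Lindel\"of. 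Hence countably many of these boxes \(V\) already cover \(W_m\). Their preimages under \(\pi_m\) cover \(\bigcup\mathcal U_m\). Taking the union over \(m\) gives a countable subfamily of the refinement covering \(\bigcup\mathcal U\supseteq Y\). Choosing one member of \(\mathcal U\) above each selected box then gives a countable subcover of \(Y\). So every subspace of \(H(A)^\omega\) is Lindel\"of.

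The one step requiring care is this countable-power step. Hereditary Lindel\"ofness is not in general preserved by countable products. What makes the argument work is that for a countable product, every open set is a union of cylinders over finitely many coordinates. Hereditary Lindel\"ofness of each finite power lets us handle each support level separately, and there are only countably many levels. I would state explicitly that a space is hereditarily Lindel\"of iff every open cover of every subspace, given by ambient open sets, has a countable subcover; the argument above proves exactly this.

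An alternative route is to note that \(H(A)^\omega\) is perfectly subparacompact by Proposition~\ref{prop:perfect-subpara} and combine this with countable extent in every subspace. The direct cylinder argument is simpler, so I would use it.
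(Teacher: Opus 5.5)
Your proof is correct, but your argument for (3)\(\Rightarrow\)(1) takes a different route from the paper's.

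You apply Corollary~\ref{cor:finite-hereditary} at each level to get (3)\(\Leftrightarrow\)(2). You then prove (2)\(\Rightarrow\)(1) as a general fact about countable products. Every open subset of \(X^\omega\) is the union over \(m\) of cylinders \(\pi_m^{-1}(W_m)\) with \(W_m\) open in \(X^m\). Hence hereditary Lindel\"ofness of all finite powers passes to \(X^\omega\), for any space \(X\). This makes the countable-power statement a formal consequence of the finite-power corollary. It avoids subparacompactness and the Burke--Moore criterion entirely. What it inherits is the corollary's reliance on the Gruenhage--Nyikos theorem and on the product theorem of Hern\'andez-Hern\'andez, Ram\'irez-Ch\'avez, and Rojas-Hern\'andez.

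The paper proves (3)\(\Rightarrow\)(1) without Corollary~\ref{cor:finite-hereditary}. It uses Proposition~\ref{prop:perfect-subpara} with Lemmas~\ref{lem:hereditary-subpara} and~\ref{lem:subpara-extent} to turn a non-Lindel\"of subspace into an uncountable closed discrete set. It then pigeonholes a common finite support, a common radius, and a common set \(I\) of \(B\)-valued support coordinates, and localizes near a Euclidean condensation point. From this it extracts an explicit uncountable discrete surface in \(B^{|I|}\), contradicting Proposition~\ref{prop:discrete-surface}. That route costs more machinery, but it exhibits the concrete combinatorial obstruction, in line with the rest of the paper.

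For (2)\(\Rightarrow\)(3), the paper simply notes that \(B^m\) is a subspace of \(H(A)^m\). Citing the corollary there is harmless but unnecessary.
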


\begin{proof}
Assume first that (3) holds.  Let
\(Y\subseteq H(A)^\omega\), and suppose that \(Y\) is not Lindel\"of.
By Lemmas~\ref{lem:hereditary-subpara} and~\ref{lem:subpara-extent}, \(Y\)
contains an uncountable closed discrete set \(D\).

For each \(x=(x_i)_{i<\omega}\in D\), choose a basic neighborhood isolating
\(x\) from \(D\).  Shrink this neighborhood if necessary, and choose a finite
set \(F_x\subseteq\omega\) and an integer \(n_x\ge1\) with the following
property.  For \(i<\omega\), define
\[
U_i(x,F_x,n_x)=
\begin{cases}
(x_i-1/n_x,x_i+1/n_x),&i\in F_x,\ x_i\in A,\\
[x_i,x_i+1/n_x),&i\in F_x,\ x_i\in B,\\
\R,&i\notin F_x.
\end{cases}
\]
Then the basic set
\[
Q(x,F_x,n_x)=\prod_{i<\omega}U_i(x,F_x,n_x)
\]
meets \(D\) only at \(x\).  There are only countably many pairs
\((F_x,n_x)\).  Hence there exist a finite set \(F\subseteq\omega\), an
integer \(n\ge1\), and an uncountable set \(D_0\subseteq D\) for which
\(F_x=F\) and \(n_x=n\) for every \(x\in D_0\).  Pass to a further
uncountable subset \(D_1\) such that the set
\[
I=\{i\in F:x_i\in B\}
\]
does not depend on \(x\in D_1\).

Let \(\pi_F:H(A)^\omega\to\R^F\) denote projection onto the coordinates
indexed by \(F\).  The map \(\pi_F\) is one-to-one on \(D_1\), since two
distinct points with the same coordinates on \(F\) would belong to each
other's isolating basic sets.  Thus \(\pi_F[D_1]\) is an uncountable subset
of the Euclidean space \(\R^F\).  Choose a Euclidean condensation point
\(p\) of this image.  Then choose an uncountable set \(E\subseteq D_1\) such
that
\[
\pi_F[E]\subseteq B_{\mathrm E}(p,1/(4n)).
\]
For distinct \(x,z\in E\), all coordinates indexed by \(F\) differ by less
than \(1/n\).  Since \(z\notin Q(x,F,n)\), some \(i\in I\) satisfies \(z_i<x_i\).
Interchanging \(x\) and \(z\) gives some \(j\in I\) with \(x_j<z_j\).
Necessarily \(i\ne j\), so \(|I|\ge2\).  Write
\(I=\{i_1<\cdots<i_s\}\), where \(s=|I|\ge2\), and define
\[
\pi_I(x)=(x_{i_1},\ldots,x_{i_s})\in\R^s.
\]
The same inequalities show that \(\pi_I\) is one-to-one on \(E\), and
\(\pi_I[E]\) is an uncountable
discrete surface in \(B^s\).  This contradicts
Proposition~\ref{prop:discrete-surface} and condition~(3).  Hence every
subspace \(Y\) of \(H(A)^\omega\) is Lindel\"of, proving (1).

Condition~(1) implies~(2).  Fix \(m\in\mathbb N\) and \(x_0\in\R\).  The
subspace of \(H(A)^\omega\) whose first \(m\) coordinates are arbitrary and
whose remaining coordinates equal \(x_0\) is homeomorphic to \(H(A)^m\), so
it is hereditarily Lindel\"of.  Condition~(2) implies~(3) because \(B^m\) is a
subspace of \(H(A)^m\) for every \(m\in\mathbb N\).
\end{proof}

\begin{corollary}
\label{cor:bounding-number}
Assume \(\mathfrak b=\omega_1\).  There is a set \(A\subseteq\R\) such
that \(B=\R\setminus A\) is uncountable, \(H(A)\) is not second countable,
and \(H(A)^\omega\) is hereditarily Lindel\"of.
\end{corollary}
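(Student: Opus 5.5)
By Theorem~\ref{thm:countable-power-reduction}, it suffices to produce, under \(\mathfrak b=\omega_1\), an uncountable set \(B\subseteq\R\) such that every finite Sorgenfrey power \(B^m\) is Lindel\"of. Then \(A=\R\setminus B\) gives a hereditarily Lindel\"of \(H(A)^\omega\). Since \(B\) is uncountable, Lemma~\ref{lem:second-countable} shows that \(H(A)\) is not second countable. The whole task is therefore the construction of \(B\).

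The natural candidate is a \(\mathfrak b\)-scale type set of size \(\omega_1\). I would take an unbounded family \(\{f_\alpha:\alpha<\omega_1\}\subseteq\omega^\omega\) that is well ordered and strictly increasing under eventual domination \(\le^*\). Such a family exists when \(\mathfrak b=\omega_1\): build it by recursion, using at each countable stage that countable families are bounded, and dominate the next element of a fixed unbounded family of size \(\omega_1\). I would then transfer it into \(\R\) via the standard homeomorphism of \(\omega^\omega\) with the irrationals, for instance through continued fractions, or into \([0,1]\) in some fixed manner. Let \(B\) be the image set, possibly enlarged by the rationals. This is the classical ZFC-from-\(\mathfrak b\) construction of an uncountable set concentrated on a countable set, i.e.\ a \(\mathfrak b\)-Luzin-type or Rothberger-type set. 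Its key property is that every Euclidean open set containing the countable set \(\mathbb Q\) contains all but countably many points of \(B\).

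To show that \(B^m\) is Lindel\"of for each \(m\), I would use Proposition~\ref{prop:discrete-surface}. Suppose \(D\subseteq B^m\) is an uncountable discrete surface. Thin \(D\) so that all coordinates are irrational points of the scale, and index \(D\) by the smallest ordinal among its coordinates. The aim is to find two points \(x,y\in D\) with \(x\le y\) coordinatewise, which contradicts the definition of a discrete surface.

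I expect this step to be the main obstacle. The tool is that, for large indices, the functions \(f_\alpha\) eventually dominate. After refining \(D\) to an uncountable \(\Delta\)-system-like family, where all points agree on an initial segment of their continued-fraction digits and have the same ordering pattern, domination beyond that segment determines the ordering of the real numbers. The continued-fraction order alternates with parity, so I would handle this by choosing the encoding so that larger functions correspond to larger reals in every coordinate once the common stem is fixed. With that encoding, a later point dominates an earlier one in all \(m\) coordinates, which gives \(x\le y\) coordinatewise. Pigeonholing over the countably many stems and patterns is the delicate part.
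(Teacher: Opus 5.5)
Your reduction is the same as the paper's. Once you have an uncountable \(B\subseteq\R\) all of whose finite Sorgenfrey powers are Lindel\"of, Theorem~\ref{thm:countable-power-reduction} and Lemma~\ref{lem:second-countable} finish the argument. The paper obtains such a \(B\) by citing Example~6.1 of Bonanzinga, Giacopello, and Szewczak (all finite powers Hurewicz, hence Lindel\"of). You instead try to build \(B\) from an \(\omega_1\)-scale, and that construction, which is the only nontrivial step, is not established.

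Two things go wrong in your sketch. First, the property you single out as key, concentration on \(\mathbb Q\), is irrelevant. If \(B\) is concentrated on \(\mathbb Q\), so is \(B\cup(-B)\), yet \(\{(x,-x):x\in B\cup(-B)\}\) is an uncountable discrete surface in its square.

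Second, and more seriously, the step ``fix a common stem, and then eventual domination decides the order of the reals'' fails. For \(\alpha<\beta\), the lexicographic comparison of \(f_\alpha\) and \(f_\beta\) is decided at their first disagreement. That disagreement can occur before the domination threshold \(N_{\alpha\beta}\), and this threshold depends on the pair. No pigeonholing can make it uniform. If an uncountable subfamily were pointwise nondecreasing on \([N,\omega)\), then for each \(j\ge N\) the values \(f_\alpha(j)\) would form a nondecreasing \(\omega_1\)-sequence of integers, hence eventually constant. The subfamily would then be bounded, contradicting unboundedness. So ``a later point dominates an earlier one in all \(m\) coordinates'' is not available. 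Also, indexing \(D\) by the least ordinal among the coordinates does not make corresponding coordinates increase in index; you need a \(\Delta\)-system refinement with fixed position pattern for that.

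What would suffice is a single pair \(x\ne y\) in the refined family whose \(m\) coordinate functions share stems up to some level \(k\), with every coordinate of \(y\) exceeding every coordinate of \(x\) at level \(k\). Unboundedness of the smallest coordinate gives levels where that coordinate takes unboundedly many values, but only along stems of that one coordinate. Once you also fix the stems of the larger coordinates, the tree of joint stems may branch infinitely in those coordinates. The bound on the smallest coordinate can then grow with them, so no contradiction with unboundedness follows. Proving this simultaneous upward splitting, or designing the set so that it holds, is the entire combinatorial content of the existence theorem the paper cites. Your proposal identifies this as ``the delicate part'' but leaves it open, so as it stands you have only the (correct) reduction.
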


\begin{proof}
Under the hypothesis \(\mathfrak b=\omega_1\), Bonanzinga, Giacopello, and
Szewczak construct an uncountable subspace \(B\) of the Sorgenfrey line all of
whose finite powers are Hurewicz and hence Lindel\"of
\cite[Example~6.1]{BonanzingaGiacopelloSzewczak2026}.
Put \(A=\R\setminus B\).  Theorem~\ref{thm:countable-power-reduction}
shows that \(H(A)^\omega\) is hereditarily Lindel\"of, while
Lemma~\ref{lem:second-countable} shows that \(H(A)\) is not second
countable.
\end{proof}

Under the continuum hypothesis, Michael had already constructed an
uncountable Sorgenfrey subspace \(B\) with \(B^\omega\) hereditarily
Lindel\"of \cite[Remark, p.~210]{Michael1971}, so every finite power
\(B^m\) is Lindel\"of.
For \(A=\R\setminus B\), Theorem~\ref{thm:countable-power-reduction}
and Lemma~\ref{lem:second-countable} give the conclusion of
Corollary~\ref{cor:bounding-number} directly.  The continuum hypothesis also
implies \(\mathfrak b=\omega_1\).

\section{A ZFC separation between the third and fourth powers}
\label{sec:zfc}

The following construction attains the four-coordinate threshold.

\begin{lemma}
\label{lem:shift-avoiding}
There is a set \(A\subseteq\R\) of cardinality \(\mathfrak c\) that meets
every nonempty perfect subset of \(\R\) and satisfies
\[
A\cap(A+n)=\emptyset
\qquad(n\in\mathbb Z\setminus\{0\}).
\]
\end{lemma}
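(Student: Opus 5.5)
Construct \(A\) by a transfinite recursion of length \(\mathfrak c\), in the style of a Bernstein-set construction. Enumerate the nonempty perfect subsets of \(\R\) as \(\{P_\alpha:\alpha<\mathfrak c\}\); there are exactly \(\mathfrak c\) of them, since each is a closed set determined by a countable base, and each has cardinality \(\mathfrak c\). By recursion on \(\alpha<\mathfrak c\), choose points \(a_\alpha\in P_\alpha\) and put \(A_\alpha=\{a_\beta:\beta<\alpha\}\). At stage \(\alpha\) the forbidden set is
\[
Z_\alpha=\bigcup_{n\in\mathbb Z}(A_\alpha+n).
\]
This set has cardinality at most \(|\alpha|\cdot\omega<\mathfrak c\). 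Since \(|P_\alpha|=\mathfrak c\), choose \(a_\alpha\in P_\alpha\setminus Z_\alpha\). Finally put \(A=\{a_\alpha:\alpha<\mathfrak c\}\).

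The required properties are checked as follows.
\begin{itemize}
\item \(A\) meets every \(P_\alpha\) by construction.
\item The choice \(a_\alpha\notin Z_\alpha\) gives \(a_\alpha\ne a_\beta\) for \(\beta<\alpha\). Hence the points are distinct and \(|A|=\mathfrak c\); \(|A|\le\mathfrak c\) is automatic.
\item For the shift condition, suppose \(x\in A\cap(A+n)\) with \(n\ne0\). Then \(x=a_\alpha=a_\beta+n\) for some \(\alpha,\beta\), and \(\alpha\ne\beta\) because \(n\ne0\). If \(\beta<\alpha\), then \(a_\alpha\in A_\alpha+n\subseteq Z_\alpha\), which is a contradiction. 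If \(\alpha<\beta\), then \(a_\beta=a_\alpha-n\in A_\beta+(-n)\subseteq Z_\beta\), again a contradiction.
\end{itemize}
Closing \(Z_\alpha\) under all integer translates, both positive and negative, is exactly what handles both orderings of the indices.

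The only point needing care is the cardinality count. The number of perfect sets is \(\mathfrak c\), because a closed set is determined by the countable subfamily of rational intervals it misses. Every nonempty perfect set has size \(\mathfrak c\), because it contains a copy of \(2^\omega\) by \cite[Theorem~6.2]{Kechris1995}. Finally, \(|Z_\alpha|<\mathfrak c\) needs only \(\alpha<\mathfrak c\) as an ordinal, since \(|\alpha|\cdot\aleph_0<\mathfrak c\) whenever \(|\alpha|<\mathfrak c\) and \(\mathfrak c\) is infinite. No regularity of \(\mathfrak c\) is needed, so the construction is in ZFC using only the axiom of choice for the well-ordering.
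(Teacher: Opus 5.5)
Your proof is correct and follows the paper's argument: the same Bernstein-type recursion choosing \(a_\alpha\in P_\alpha\setminus\bigcup_{\beta<\alpha}(a_\beta+\mathbb Z)\), the same cardinality count, and a two-case check of the shift condition that spells out the paper's one-line remark. One small slip: your reason for there being exactly \(\mathfrak c\) perfect sets only gives the upper bound, which suffices if the enumeration may repeat (the paper gets the lower bound from translates of the Cantor set).
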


\begin{proof}
The family of nonempty perfect subsets of \(\R\) has cardinality
\(\mathfrak c\).  There are at most \(\mathfrak c\) closed subsets of the
second-countable space \(\R\), while the translates of the Cantor set by
distinct real numbers are distinct nonempty perfect sets.  Enumerate the
nonempty perfect subsets as
\[
\{P_\alpha:\alpha<\mathfrak c\}.
\]
Every \(P_\alpha\) has cardinality \(\mathfrak c\).  Recursively choose
\[
a_\alpha\in P_\alpha\setminus
\bigcup_{\beta<\alpha}(a_\beta+\mathbb Z).
\]
At stage \(\alpha<\mathfrak c\), the excluded set has cardinality at most
\(\max\{\aleph_0,|\alpha|\}<\mathfrak c\), so the choice is possible.  Put
\[
A=\{a_\alpha:\alpha<\mathfrak c\}.
\]
The selected points are distinct, so \(|A|=\mathfrak c\), and \(A\) meets every
nonempty perfect subset of \(\R\).  If distinct points of \(A\) differed by
an integer, the one chosen later would have been excluded at its stage.
Thus the displayed shift-avoidance property holds.
\end{proof}

The construction is a transfinite integer-shift avoidance argument.  For
Sierpi\'nski's construction technique on subsets of \(\R\), see
\cite{vanMill1992}.

\begin{theorem}
\label{thm:zfc-separation}
There is a set \(A\subseteq\R\) such that, with \(B=\R\setminus A\),
\begin{enumerate}
\item \(|B|=\mathfrak c\) and \(B\) contains no subspace homeomorphic to
\(2^\omega\).
\item \(H(A)^2\) and \(H(A)^3\) are Lindel\"of.
\item \(B^2\), with its Sorgenfrey product topology, contains a discrete
surface of cardinality \(\mathfrak c\) and is not Lindel\"of.
\item \(H(A)^4\) contains a closed discrete affine line of cardinality
\(\mathfrak c\).  Hence \(H(A)^n\) is not Lindel\"of for every integer
\(n\ge4\), and \(H(A)^\omega\) is not Lindel\"of.
\end{enumerate}
\end{theorem}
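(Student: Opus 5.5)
We take \(A\) from Lemma~\ref{lem:shift-avoiding} and put \(B=\R\setminus A\). The four items are then checked in order: (1) is immediate, (2) is the cube characterization, and (3) and (4) use an explicit affine line.

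For (1), \(A\) meets every nonempty perfect subset of \(\R\), so \(B\) contains no nonempty perfect set. A copy of \(2^\omega\) in \(B\) would be compact, hence closed in \(\R\), and would have no isolated points, so it would be a perfect set. Hence \(B\) contains no such copy. Since \(A\cap(A+1)=\emptyset\), we have \(A+1\subseteq B\), so \(|B|\ge|A|=\mathfrak c\). Item (2) then follows from Corollary~\ref{cor:cube-characterization}, since \(B\) contains no copy of \(2^\omega\).

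The key observation for (3) and (4) is that for every \(t\in\R\), at most one of \(t\) and \(t+1\) lies in \(A\), by shift avoidance with \(n=1\). Likewise at most one of \(-t\) and \(-t+1\) lies in \(A\). Consider the affine line
\[
L=\{(t,\,t+1,\,-t,\,-t+1):t\in\R\}\subseteq H(A)^4,
\]
with direction \(v=(1,1,-1,-1)\). By Corollary~\ref{cor:line-criterion}, \(L\) is discrete if for every \(t\) some positive coordinate, \(t\) or \(t+1\), lies in \(B\), and some negative coordinate, \(-t\) or \(-t+1\), lies in \(B\). Both conditions hold by the observation. Theorem~\ref{thm:affine-criterion} gives that \(L\) is closed, so \(L\) is a closed discrete set of cardinality \(\mathfrak c\) and \(H(A)^4\) is not Lindel\"of.

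For \(n\ge4\), \(H(A)^4\) embeds as a closed slice of \(H(A)^n\) by fixing the remaining coordinates. Likewise \(H(A)^4\) embeds as a closed subspace of \(H(A)^\omega\) by fixing all but four coordinates at \(x_0\), as in the proof of Theorem~\ref{thm:countable-power-reduction}. Closed subspaces of Lindel\"of spaces are Lindel\"of, so none of these powers is Lindel\"of.

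For (3), we look for a discrete surface in \(B^2\) of size \(\mathfrak c\), using Proposition~\ref{prop:discrete-surface}. The natural choice is
\[
\{(a+1,\,-a):a\in A'\}
\]
for a set \(A'\) of size \(\mathfrak c\) with \(a+1\in B\) and \(-a\in B\) for all \(a\in A'\). Any two distinct points of this set differ in opposite directions in the two coordinates, so it is a discrete surface. The obstacle is ensuring \(-a\in B\) for \(\mathfrak c\) many \(a\in A\), which the lemma as stated does not guarantee. The simplest fix is to strengthen the recursion in Lemma~\ref{lem:shift-avoiding} to avoid \(\pm a_\beta+\mathbb Z\). Then \(A\cap(-A+n)=\emptyset\) for all \(n\), so \(-a+1\in B\) and \(a+1\in B\) for every \(a\in A\). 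We then use the anti-diagonal points \((a+1,-a+1)\) with \(a\in A\), which form a discrete surface of size \(\mathfrak c\) in \(B^2\).

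Alternatively, we can avoid modifying the lemma by using \(L\) itself. At each \(t\), choose a \(B\)-coordinate \(i(t)\in\{1,2\}\) among the increasing coordinates and \(j(t)\in\{3,4\}\) among the decreasing ones. Some pair \((i,j)\) occurs for \(\mathfrak c\) many \(t\). Projecting those points of \(L\) onto coordinates \(i,j\) gives points of \(B^2\) along a line of slope \(-1\), which is a discrete surface of cardinality \(\mathfrak c\). Proposition~\ref{prop:discrete-surface} then shows that \(B^2\) is not Lindel\"of. This pigeonhole argument is the step to get right, and it avoids changing the lemma.
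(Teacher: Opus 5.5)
Your proposal is correct and, in its final form, follows the paper's proof: you use the same set \(A\) from Lemma~\ref{lem:shift-avoiding}, the same line \(\{(t,t+1,-t,-t+1)\}\) handled by Corollary~\ref{cor:line-criterion} and Theorem~\ref{thm:affine-criterion}, and for (3) the same pigeonhole over the four parameter classes \(T_{ij}=\{t:t+i\in B,\ -t+j\in B\}\); your closed slices for \(n\ge4\) and \(\omega\) are an equally valid substitute for the paper's projections. The optional strengthening of the lemma is unnecessary, and as written it misses self-pairs: avoiding \(\pm a_\beta+\mathbb Z\) for \(\beta<\alpha\) does not prevent \(a_\alpha=1/2\), so \(A\cap(-A+1)=\emptyset\) can fail, although discarding that single point (or excluding \(\frac12\mathbb Z\) at every stage) repairs the anti-diagonal argument.
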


\begin{proof}
Choose \(A\) as in Lemma~\ref{lem:shift-avoiding}.  Since \(A\) meets every
nonempty perfect subset of \(\R\), the complement \(B\) contains no subspace
homeomorphic to \(2^\omega\).  Also \(A+1\subseteq B\), so
\(|B|=\mathfrak c\).  Theorems~\ref{thm:square} and~\ref{thm:zfc-square-cube}
give Lindel\"ofness of \(H(A)^2\) and \(H(A)^3\).

For \(i,j\in\{0,1\}\), put
\[
T_{ij}=\{t\in\R:t+i\in B\text{ and }-t+j\in B\}.
\]
The four sets \(T_{ij}\) cover \(\R\).  By
Lemma~\ref{lem:shift-avoiding}, neither \(\{t,t+1\}\) nor
\(\{-t,-t+1\}\) can be contained in \(A\).  Thus at least one point of each
pair belongs to \(B\).  Hence some \(T_{ij}\) has
cardinality \(\mathfrak c\).  For that pair, the set
\[
\{(t+i,-t+j):t\in T_{ij}\}\subseteq B^2
\]
is a discrete surface of cardinality \(\mathfrak c\).  If \(s<t\), the
first coordinate increases and the second decreases.
Proposition~\ref{prop:discrete-surface} therefore shows that \(B^2\) is not
Lindel\"of.

Consider the affine line
\[
D=\{(t,t+1,-t,-t+1):t\in\R\}\subseteq\R^4.
\]
Its direction vector is \((1,1,-1,-1)\).  For every \(t\), at least one of
\(t,t+1\) belongs to \(B\), and at least one of \(-t,-t+1\) belongs to
\(B\).  Corollary~\ref{cor:line-criterion} shows that \(D\) is discrete in
\(H(A)^4\), while Theorem~\ref{thm:affine-criterion} shows that it is closed.
Thus \(D\) is a closed discrete subspace of cardinality \(\mathfrak c\), so
\(H(A)^4\) is not Lindel\"of.  Projection onto the first four coordinates
then gives the assertions for every \(n\ge4\) and for the countable power.
\end{proof}

Together with Theorem~\ref{thm:cube-affine-barrier},
Theorem~\ref{thm:zfc-separation} shows that \(4\) is the least exponent \(m\)
for which there exists a nonempty \(A\subseteq\R\) such that \(H(A)^m\)
contains a closed discrete affine line.

For the set \(A\) in Theorem~\ref{thm:zfc-separation}, the Sorgenfrey square
\(B^2\) is non-Lindel\"of while the Hattori cube \(H(A)^3\) is Lindel\"of.
Theorem~\ref{thm:finite-power-reduction} shows that non-Lindel\"ofness of
\(B^2\) is necessary for non-Lindel\"ofness of \(H(A)^3\), whereas this
construction shows that it is not sufficient.

The same example also shows that \(H(A)^2\) and \(H(A)^3\) are Lindel\"of
but not hereditarily Lindel\"of.  The subspace \(B^2\subseteq H(A)^2\) is
not Lindel\"of, and for any \(x_0\in\R\), the subspace
\(B^2\times\{x_0\}\subseteq H(A)^3\) is homeomorphic to \(B^2\).

\section{Further questions}

Corollary~\ref{cor:finite-hereditary} and
Theorem~\ref{thm:countable-power-reduction} characterize hereditary
Lindel\"ofness of the finite and countable Hattori powers.  The questions
below concern Lindel\"ofness itself.

Corollary~\ref{cor:cube-characterization} gives the cube characterization
requested in \cite[Problem~7.8]{HHRR2025}, while
Theorem~\ref{thm:zfc-separation} shows that the square obstruction no longer
controls powers \(n\ge4\).  Kulesza's meeting abstract
\cite{KuleszaJMM2018} announces the general even-to-odd implication
\eqref{eq:kulesza-parity}.  Theorem~\ref{thm:zfc-square-cube} proves its
first case.

\begin{question}
\label{q:power-characterization}
For each fixed integer \(n\ge4\), characterize the sets \(A\subseteq\R\)
for which \(H(A)^n\) is Lindel\"of.  Also characterize the sets \(A\) for
which \(H(A)^\omega\) is Lindel\"of.  In particular, with
\(B=\R\setminus A\), determine structural conditions on the Sorgenfrey
stratum \(B\) that account for the general even-to-odd implication announced
by Kulesza and for the possible odd-to-even losses of Lindel\"ofness.
\end{question}

\end{document}